\theoremstyle{plain}
\newtheorem{theorem}{Theorem}
\newtheorem{lemma}[theorem]{Lemma}
\newtheorem{proposition}[theorem]{Proposition}
\newtheorem{corollary}[theorem]{Corollary}
\theoremstyle{definition}
\newtheorem{definition}[theorem]{Definition}
\theoremstyle{remark}
\newtheorem{remark}[theorem]{Remark}
\numberwithin{theorem}{subsection}
\numberwithin{equation}{section}
\newcommand{\B}{\mathbb{B}}
\newcommand{\D}{\mathbb{D}}
\newcommand{\HH}{\mathbb{H}}
\newcommand{\Sbb}{\mathbb{S}}
\newcommand{\Mbb}{\mathbb{M}}
\newcommand{\R}{\mathbb{R}}
\newcommand{\Z}{\mathbb{Z}}
\newcommand{\Spe}{\mathrm{Sp}}
\newcommand{\spe}{\mathfrak{sp}}
\newcommand{\re}{\operatorname{Re}}
\newcommand{\cF}{\mathcal{F}}
\newcommand{\cM}{\mathcal{M}}
\newcommand{\downmapsto}{\rotatebox[origin=c]{-90}{$\scriptstyle\mapsto$}\mkern2mu}
\begin{document}

\title[Geometry of regular M\"obius and the unit ball]{Geometry of the slice regular M\"obius transformations of the quaternionic unit ball}

\author{Raul Quiroga-Barranco}
\address{Centro de Investigaci\'on en Matem\'aticas \\ Guanajuato \\ M\'exico}
\email{quiroga@cimat.mx}

\begin{abstract}
	For the quaternionic unit ball $\mathbb{B}$, let us denote by $\mathcal{M}(\mathbb{B})$ the set of slice regular M\"obius transformations mapping $\mathbb{B}$ onto itself. We introduce a smooth manifold structure on $\mathcal{M}(\mathbb{B})$, for which the evaluation(-action) map of $\mathcal{M}(\mathbb{B})$ on $\mathbb{B}$ is smooth. The manifold structure considered on $\mathcal{M}(\mathbb{B})$ is obtained by realizing this set as a quotient of the Lie group $\mathrm{Sp}(1,1)$, Furthermore, it turns out that $\mathbb{B}$ is a quotient as well of both $\mathcal{M}(\mathbb{B})$ and $\mathrm{Sp}(1,1)$. These quotients are in the sense of principal fiber bundles. The manifold $\mathcal{M}(\mathbb{B})$ is diffeomorphic to $\mathbb{R}^4 \times S^3$.
\end{abstract}

\subjclass[2020]{Primary 30G35; Secondary 22F30}

\keywords{Slice regularity, quaternions, M\"obius transformations, Lie groups}

\maketitle

\section{Introduction}
\label{sec:intro}
The hyperholomorphic function theory of a quaternionic variable is a very interesting and fruitful topic of analysis. The notion of slice regularity, also known as Cullen regularity, is particularly useful as a tool to understand non-commutative function theory and functional analysis. The reason is that slice regularity nicely adapts to quaternions (and other algebras) while maintaining a close enough relation with the classical complex analysis. This has lead to a hyperholomorphic theory that naturally, but non-trivially, generalizes many of the properties and results that hold for complex holomorphic functions. It goes without saying that these generalizations are indeed non-trivial, since the non-commutativity of quaternions poses problems that sometimes cannot be solved with classical techniques. However, this contributes to the richness of hyperholomorphic theory and the discovery of interesting new structures.

For the unit disk $\D$ in the complex plane, the notion of M\"obius transformation yields an important object that lies at the intersection of geometry and analysis. For the case given by $\D$, the holomorphicity of M\"obius transformations is elementary but fundamental to understand the structure of $\D$ and its associated mathematical constructions. In the unit ball $\B$ of the quaternions $\HH$ there is a straightforward generalization of M\"obius transformation that we call in this work classical quaternionic. We refer to section~\ref{sec:classicalMobius} for further details, where we denote by $\Mbb(\B)$ the set of all classical (quaternionic) M\"obius transformations that preserve $\B$. From the point of view of geometry, $\Mbb(\B)$ is important since it leads to a pair of fundamental structures: $4$-dimensional real hyperbolic and $1$-dimensional quaternionic hyperbolic geometries (see~Remark~\ref{rmk:BasSp(1,1)quotient}). This is possible because $\Mbb(\B)$ carries a Lie group structure that comes from the (matrix) Lie group $\Spe(1,1)$. In fact, as it is reviewed in section~\ref{sec:classicalMobius}, the Lie group $\Mbb(\B)$ is a quotient of $\Spe(1,1)$ (see~subsection~\ref{subsec:classicalMobius}) and $\B$ is a quotient of both $\Mbb(\B)$ and $\Spe(1,1)$ by suitable closed subgroups (see~subsection~\ref{subsec:Bquotientclassical}).

However, the Lie group $\Mbb(\B)$ of classical (quaternionic) M\"obius transformations does not entitle any sort of hyperholomorphic analysis. This is related to the fact that $4$-dimensional real hyperbolic and $1$-dimensional quaternionic geometries have no apparent connection with hyperholomorphic function theory. For further details on the last claim, we refer to \cite[Remark~6.11]{QB-SliceKahler}.

Using the $*$-regular product, it is possible to construct a slice regular generalization of the classical quaternionic M\"obius transformations. These so-called (slice) regular M\"obius transformations were introduced in~\cite{StoppatoMobius}. They have been the object of a profound study. We refer to \cite{GentiliStoppatoStruppa2ndEd}, and the bibliography therein, where most of the known results about them is discussed. In subsection~\ref{subsec:sliceregularity} we denote by $\cM(\B)$ the set of all (slice) regular M\"obius transformations that preserve $\B$ and recall some of their properties. Right from the definition of the elements in $\cM(\B)$ one can observe that this transformations are defined using the Lie group $\Spe(1,1)$.

From the previous discussion it is natural to consider the problem of introducing a Lie group structure on $\cM(\B)$. The reason is that for $\Mbb(\B)$ the existence of such structure leads to very interesting conclusions. However, it is well known that $\cM(\B)$ is not a group for the usual composition of maps. Also, other known notions of regular composition or product do not lead to a group structure (see~Remark~\ref{rmk:cM(B)notgroup}). Furthermore, we prove in Corollary~\ref{cor:cM(B)notgroup} that $\cM(\B)$ does not admit any group structure for which the natural map that defines it from $\Spe(1,1)$ is a homomorphism (or an anti-homomorphism). At this stage, it seems that $\cM(\B)$ cannot be endowed with a useful group structure.

Despite this obstruction, the goal of this work is to prove that $\cM(\B)$ admits a smooth manifold structure whose properties generalize those observed above and discussed with detail below for $\Mbb(\B)$. We achieved this so that $\cM(\B)$ and $\B$ can be related by differential geometric constructions while maintaining the spirit of hyperholomorphic function theory.

We introduce in Theorem~\ref{thm:cM(B)manifold} the manifold structure on $\cM(\B)$ studied in this work. We achieve this by identifying $\cM(\B)$ with a quotient of the Lie group $\Spe(1,1)$ by the closed subgroup $\Spe(1) I_2$. This leads right away to a Lie theoretical  explanation of why $\cM(\B)$ is not a Lie group: $\Spe(1)$ is not a central subgroup of $\HH\setminus\{0\}$ because the quaternions are not commutative. We refer to Remark~\ref{rmk:cM(B)manifold} for further details and a comparison with the classical case of $\Mbb(\B)$. On the other hand, Theorem~\ref{thm:cM(B)isBxSp(1)asmanifolds} shows that the natural map that characterizes the elements of $\cM(\B)$ as expressions of the form
\[
	(1 - q \overline{a})^{-*}*(q - a)u
\]
is in fact a diffeomorphism, thus showing that $\cM(\B)$ is diffeomorphic to the manifold $\R^4 \times S^3$. It also proves that our manifold structure on $\cM(\B)$ is closely related to the hyperholomorphic analysis of slice regular functions.

For the classical case of $\Mbb(\B)$, the natural map $\Spe(1,1) \rightarrow \Mbb(\B)$ that assigns to a matrix the corresponding classical (quaternionic) M\"obius transformation is in fact a (smooth) homomorphism of Lie groups (see~Proposition~\ref{prop:AtoFAhomomorphism}). From the previous discussion, it is clear that the natural map $\Spe(1,1) \rightarrow \cM(\B)$ that now assigns to a matrix the corresponding (slice) regular M\"obius transformation cannot be a homomorphism of Lie groups. However, we prove that this map is smooth. Furthermore, Corollary~\ref{cor:Sp(1,1)cM(B)PrincipalFiberBundle} proves that the map $\Spe(1,1) \rightarrow \cM(\B)$ is a so-called principal fiber bundle with structure group $\Spe(1)$. Subsection~\ref{subsec:principalbundles} reviews the required definitions and results needed to state and prove this claim and others below.

Next, we study the manifold $\cM(\B)$ as it relates to the unit ball $\B$. In the classical case, the Lie group $\Mbb(\B)$ has a smooth right action on $\B$ (see~subsection~\ref{subsec:Bquotientclassical}). Again, strictly speaking there cannot be a smooth action of $\cM(\B)$ on $\B$, because the former is not a Lie group. However, Theorem~\ref{thm:cM(B)action} shows that the map obtained by evaluating the elements of $\cM(\B)$ on $\B$ is smooth as a function defined on $\cM(\B) \times \B$. Hence, we do have a sort of generalized smooth action for the manifold structure introduced in $\cM(\B)$. Furthermore, we prove that the subset $\cM(\B)_0$ of transformations that fix the origin is both a compact submanifold of $\cM(\B)$ and a Lie group isomorphic to $\Spe(1)$. The use of principal fiber bundle constructions of the sort mentioned above, allows us to prove in Theorem~\ref{thm:BascM(B)quotient} that $\B$ is a smooth quotient of the manifold $\cM(\B)$ by an action of the Lie group $\cM(\B)_0$. Also, Corollary~\ref{cor:cM(B)asbundleoverB} shows that such quotient comes from a principal fiber bundle.

On the other hand, Theorem~\ref{thm:BasSp(1,1)quotient} provides a realization of $\B$ as a quotient of $\Spe(1,1)$ that comes from using $\cM(\B)$. It is remarkable that this yields a double coset quotient of $\Spe(1,1)$, i.e.~a quotient where one mods out by subgroups both on the left and right. Remark~\ref{rmk:BasSp(1,1)quotient-sliceregular} compares this phenomenon with the classical case of $\Mbb(\B)$ while explaining the reason for this behavior, which again boils down to the non-commutativity of $\HH$.

We would like to comment on the choice of maps in this work. More precisely, orbits of a transformation are computed using the inverse. Also, our constructions consider the maps $A \mapsto F_{A^{-1}}$ and $A \mapsto \cF_{A^{-1}}$ that assign to a matrix $A \in \Spe(1,1)$ the corresponding classical and slice regular, respectively, M\"obius transformation. Again, there is an inverse involved. We explain in Remark~\ref{rmk:FvsF-1} the reason for these choices, that are needed because of the non-commutativity of $\HH$ and the fact that $\cM(\B)$ is not a group.

It is worthwhile to mention a topic not considered here: Clifford M\"obius transformations. Their differential geometry, as it relates to certain Lie groups, is studied in \cite{Nolder}. In view of the notion of slice monogenic functions (see~\cite{ColomboSabadiniStruppaFunctionalBook}) and the developments presented here, it is natural to ask about the differential geometry of spaces of slice monogenic functions. We believe that this could be formulated into an interesting problem to solve elsewhere.

The author would like to thank Paula Cerejeiras and Fabrizio Colombo for bringing to his attention some facts and references that helped to improve this work.

\section{Classical M\"obius transformations}
\label{sec:classicalMobius}
We will denote by $\B$ the unit ball in the division algebra $\HH$ of quaternions. Let us also denote by $\Spe(1,1)$ the symplectic pseudo-unitary group with signature $(1,-1)$, which is defined by
\[
	\Spe(1,1) = \{ A \in M_2(\HH) : A^* I_{1,1} A = I_{1,1} \},
\]
where we take
\[
	I_{1,1} = 
	\begin{pmatrix*}[r]
		1 & 0 \\
		0 & -1
	\end{pmatrix*}.
\]
We will also consider the group of unitary quaternions defined by
\[
	\Spe(1) = \{ q \in \HH : |q| = 1\}.
\]
Both $\Spe(1,1)$ and $\Spe(1)$ are connected simple Lie groups, and the latter is compact. The former allows to define the \textbf{classical (quaternionic) M\"obius transformations}. More precisely, for a matrix
\[
	A = 
	\begin{pmatrix}
		a & c \\
		b & d
	\end{pmatrix} \in \Spe(1,1),
\]
the classical M\"obius transformation $F_A : \B \rightarrow \B$ associated to $A$ is given by
\begin{equation}\label{eq:FA}
	F_A(q) = (qc + d)^{-1} (qa + b).
\end{equation}
The collection of all classical M\"obius transformations will be denoted here by $\Mbb(\B)$. In other words, we have by definition
\[
	\Mbb(\B) = \{ F_A : A \in \Spe(1,1) \}.
\]
These transformations have been thoroughly studied in \cite{BisiGentiliMobius,BisiStoppatoMobius,CaoParkerWang,HarveySpinors,Helgason}, to name a few references. Here, we will present some of their basic well-known properties. Nevertheless, we will do so in a manner that will be useful later on. Hence, we will state known properties without proof, and refer to the standard works (e.g.~those mentioned above), while providing the arguments needed to complete less standard claims.

\subsection{Classical M\"obius transformations as a Lie group}
\label{subsec:classicalMobius}
In the first place, it is easy to see that $F_{I_2}$ is the identity map and also that
\begin{equation}\label{eq:FAB}
	F_{AB} = F_B \circ F_A,
\end{equation}
for every $A, B \in \Spe(1,1)$. In particular, $\Mbb(\B)$ is a group (for the product given by the composition of maps) and the assignment $A \mapsto F_A$ is an anti-homomorphism. It also follows that
\begin{equation}\label{eq:FA(-1)}
	(F_A)^{-1} = F_{A^{-1}},
\end{equation}
for every $A \in \Spe(1,1)$. We conclude as well that the assignment $A \mapsto F_A$ yields a right $\Spe(1,1)$-action on $\B$. Such action is well known to be smooth (i.e.~a Lie group action) and transitive.

We will consider the map
\begin{align}\label{eq:AtoFAhomomorphism}
	\Spe(1,1) &\rightarrow \Mbb(\B)  \\
		A &\mapsto F_{A^{-1}}, \notag
\end{align}
which is thus a homomorphism of groups. As an immediate consequence we obtain the next result.

\begin{proposition}\label{prop:AtoFAhomomorphism}
	The homomorphism of groups $\Spe(1,1) \rightarrow \Mbb(\B)$ given by \eqref{eq:AtoFAhomomorphism} has kernel $\Z_2 I_2$. This induces a natural isomorphism of groups
	\begin{align*}
		\Spe(1,1)/\Z_2 I_2 &\rightarrow \Mbb(\B)   \\
			A \Z_2 I_2 &\mapsto F_{A^{-1}}.
	\end{align*}
	In particular, $\Mbb(\B)$ has a Lie group structure for which \eqref{eq:AtoFAhomomorphism} is a homomorphism of Lie groups and $\Mbb(\B)$ has dimension $10$ as a manifold.
\end{proposition}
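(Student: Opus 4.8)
The plan is to verify the three assertions of the proposition in turn: first that the map \eqref{eq:AtoFAhomomorphism} is a homomorphism of abstract groups with kernel exactly $\Z_2 I_2$, second that this yields the claimed isomorphism from the quotient group, and third that transporting the Lie group structure of $\Spe(1,1)/\Z_2 I_2$ through this isomorphism gives $\Mbb(\B)$ a Lie group structure of the asserted dimension.

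For the homomorphism property, I would simply combine \eqref{eq:FAB} with the fact that $A \mapsto F_A$ is an anti-homomorphism: sending $A \mapsto F_{A^{-1}}$ reverses the order twice, so $AB \mapsto F_{(AB)^{-1}} = F_{B^{-1}A^{-1}} = F_{A^{-1}} \circ F_{B^{-1}}$ by \eqref{eq:FAB}, which is the composite in $\Mbb(\B)$ of the images of $A$ and $B$ in the correct order. This is already noted right before the statement, so the real content is the kernel computation. The map's kernel consists of those $A \in \Spe(1,1)$ with $F_{A^{-1}} = \mathrm{id}$, equivalently $F_A = \mathrm{id}$ since inversion is a bijection of $\Spe(1,1)$. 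So I must show $F_A(q) = q$ for all $q \in \B$ forces $A \in \Z_2 I_2 = \{\pm I_2\}$. Writing $A = \begin{pmatrix} a & c \\ b & d \end{pmatrix}$, the condition $(qc+d)^{-1}(qa+b) = q$ for all $q \in \B$ becomes $qa + b = (qc+d)q = qcq + dq$ for all $q$ in a neighborhood of $0$; setting $q = 0$ gives $b = 0$, and then $qa = qcq + dq$ for all small $q$. The main obstacle here is handling the non-commutativity carefully: one cannot simply cancel $q$. Instead I would exploit that this polynomial identity in $q$ must hold on an open set of $\HH$, so one can compare terms — the term $qcq$ is genuinely quadratic unless $c = 0$, forcing $c = 0$, and then $qa = dq$ for all $q$ near $0$, hence for all $q \in \HH$, which (taking $q = 1$) gives $a = d$ and then $qa = aq$ for all quaternions $q$, so $a$ is central in $\HH$, i.e.\ $a \in \R$. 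Finally the defining relation $A^* I_{1,1} A = I_{1,1}$ with $A = aI_2$ reads $|a|^2 I_{1,1} = I_{1,1}$, so $a^2 = 1$ and $a = \pm 1$. This shows the kernel is $\Z_2 I_2$, and conversely $\pm I_2$ clearly map to the identity.

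Granting the kernel computation, the first isomorphism theorem for groups immediately gives the isomorphism $\Spe(1,1)/\Z_2 I_2 \xrightarrow{\ \sim\ } \Mbb(\B)$, $A\Z_2 I_2 \mapsto F_{A^{-1}}$, since the homomorphism \eqref{eq:AtoFAhomomorphism} is by definition surjective onto $\Mbb(\B)$. For the Lie-theoretic conclusion, I would invoke the standard fact that if $G$ is a Lie group and $N \trianglelefteq G$ is a closed normal subgroup, then $G/N$ carries a canonical Lie group structure with $G \to G/N$ a submersion; here $\Z_2 I_2$ is a finite, hence closed, central subgroup of $\Spe(1,1)$, so $\Spe(1,1)/\Z_2 I_2$ is a Lie group and the quotient map is a covering homomorphism of Lie groups. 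Transporting this structure along the group isomorphism endows $\Mbb(\B)$ with a Lie group structure, and the composite $\Spe(1,1) \to \Spe(1,1)/\Z_2 I_2 \xrightarrow{\sim} \Mbb(\B)$, which is exactly \eqref{eq:AtoFAhomomorphism}, is then a homomorphism of Lie groups. Finally, since $\Z_2 I_2$ is discrete, $\dim \Mbb(\B) = \dim \Spe(1,1)/\Z_2 I_2 = \dim \Spe(1,1)$; and $\dim \Spe(1,1) = 10$ — this is the real dimension of the $10$-dimensional simple Lie group $\Spe(1,1)$, which one can confirm by noting $\spe(1,1) = \{X \in M_2(\HH) : X^* I_{1,1} + I_{1,1} X = 0\}$ consists of matrices $\begin{pmatrix} \alpha & \beta \\ \bar\beta & \delta \end{pmatrix}$ with $\alpha,\delta$ purely imaginary quaternions and $\beta \in \HH$ arbitrary, giving $3 + 3 + 4 = 10$. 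I expect the kernel computation to be the only genuinely non-routine step; everything else is either quoted from the surrounding text or a direct application of the standard quotient-Lie-group construction.
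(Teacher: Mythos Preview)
Your proposal is correct and follows the same approach as the paper's own proof, which is extremely terse: the paper simply declares the kernel computation and the resulting isomorphism to be ``simple exercises,'' invokes that $\Z_2 I_2$ is finite (hence closed and normal) to get the Lie group structure on the quotient, and quotes $\dim\Spe(1,1)=10$. You have supplied exactly the details the paper omits --- the explicit kernel computation via the polynomial identity $qa+b=qcq+dq$, the first isomorphism theorem, and the dimension count via $\spe(1,1)$ --- without deviating from the intended line of argument.
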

\begin{proof}
	Computation of the kernel of the first stated homomorphism, and thus the isomorphism, are simple exercises. The conclusion on the Lie group structure of $\Mbb(\B)$ follows from the fact that $\Z_2 I_2$ is a finite, and thus closed, normal subgroup of $\Spe(1,1)$. The last claim is a consequence of the fact that $\Spe(1,1)$ has dimension~$10$.
\end{proof}

\begin{remark}\label{rmk:AtoFAhomomorphism}
	Other works have already noted that the map $A \mapsto F_A$ is an anti-homomorphism because of the identity \eqref{eq:FAB}, which actually holds for arbitrary classical (quaternionic) M\"obius transformations of $\HH$. From this one can conclude (see for example~\cite{BisiGentiliMobius}) that the map $A \mapsto F_A$ allows to construct an anti-isomorphism between $\Spe(1,1)/\Z_2 I_2$ and $\Mbb(\B)$. However, we have proved in Proposition~\ref{prop:AtoFAhomomorphism} that they are isomorphic. These two seemingly opposite claims can be conciliated as follows. If $\varphi : G \rightarrow H$ is an anti-homomorphism of groups, then $\varphi(g^{-1}) = \varphi(g)^{-1}$, for every $g \in G$. Since $g \mapsto g^{-1}$ is an anti-isomorphism of $G$, we conclude that the map $g \mapsto \varphi(g^{-1}) = \varphi(g)^{-1}$ is now a homomorphism from $G$ to $H$. In other words, by taking an inverse on either group, two groups are isomorphic if and only if they are anti-isomorphic. This is precisely our construction.
\end{remark}

\subsection{The unit ball $\B$ as a quotient: classical case}
\label{subsec:Bquotientclassical}
There is a natural (left) action of $\Mbb(\B)$ on $\B$ given by the evaluation
\begin{align}\label{eq:M(B)action}
	\Mbb(\B) \times \B &\rightarrow \B  \\
		(F,q) &\mapsto F(q), \notag
\end{align}
which satisfies the properties stated in the next result. From now on, $\Spe(1) \times \Spe(1)$ will be considered as a diagonally embedded subgroup of $\Spe(1,1)$.

\begin{proposition}\label{prop:M(B)action}
	For the Lie group structure on $\Mbb(\B)$ given by Proposition~\ref{prop:AtoFAhomomorphism}, the $\Mbb(\B)$-action on $\B$ defined by \eqref{eq:M(B)action} is smooth and transitive. The subgroup $\Mbb(\B)_0$ of elements in $\Mbb(\B)$ that fix the origin is given by
	\[
		\Mbb(\B)_0 = \{ F_A : A \in \Spe(1) \times \Spe(1) \}.
	\]
	In other words, \eqref{eq:M(B)action} defines a transitive Lie group action of $\Mbb(\B)$ on $\B$ with stabilizer at $0$ given by $\Mbb(\B)_0$ as above.
\end{proposition}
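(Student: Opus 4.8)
The plan is to establish the three assertions in turn---smoothness of \eqref{eq:M(B)action}, transitivity, and the description of the stabilizer $\Mbb(\B)_0$---and then to read off the final reformulation from the orbit--stabilizer principle for smooth transitive Lie group actions.

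For smoothness I would argue by descent from $\Spe(1,1)$. Consider the lifted evaluation $\Phi \colon \Spe(1,1) \times \B \to \B$ given by $\Phi(A,q) = F_{A^{-1}}(q)$. Since every $A \in \Spe(1,1)$ satisfies $A^{-1} = I_{1,1} A^* I_{1,1}$, the entries of $A^{-1}$ are $\R$-linear (in particular smooth) functions of the entries of $A$; substituting these into \eqref{eq:FA}, together with the well-known fact that $qc + d \neq 0$ whenever $A \in \Spe(1,1)$ and $q \in \B$ (so that $F_{A^{-1}}$ is genuinely defined on $\B$), shows that $\Phi$ is smooth. Now let $\pi \colon \Spe(1,1) \to \Mbb(\B)$ be the homomorphism \eqref{eq:AtoFAhomomorphism}; by Proposition~\ref{prop:AtoFAhomomorphism} its kernel $\Z_2 I_2$ is discrete, so $\pi$ is a surjective submersion and hence so is $\pi \times \mathrm{id}_\B$. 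Since $\Phi$ is constant on the fibers of $\pi \times \mathrm{id}_\B$ and the map it induces on the quotient is exactly \eqref{eq:M(B)action}, the universal property of surjective submersions forces \eqref{eq:M(B)action} to be smooth. I expect this descent argument---and in particular pinning down the non-vanishing of $qc+d$ and the submersion property---to be the main point requiring care; the rest is a short computation.

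Transitivity and the stabilizer are then quick. Transitivity is inherited from $\Spe(1,1)$: the excerpt records that the right action $A \mapsto F_A$ on $\B$ is transitive, and the $\Mbb(\B)$-orbit of a point $q$ equals $\{\, F_A(q) : A \in \Spe(1,1) \,\}$, which is the full $\Spe(1,1)$-orbit of $q$, hence all of $\B$. For the stabilizer, evaluating \eqref{eq:FA} at the origin gives $F_A(0) = d^{-1} b$, so $F_A$ fixes $0$ precisely when $b = 0$; imposing $b = 0$ in the defining identity $A^* I_{1,1} A = I_{1,1}$ and comparing entries yields $\bar a a = 1$, then $\bar a c = 0$ and hence $c = 0$ (as $a$ is invertible), and finally $\bar c c - \bar d d = -1$, i.e. $\bar d d = 1$; conversely every $\operatorname{diag}(a,d)$ with $|a| = |d| = 1$ lies in $\Spe(1,1)$ and fixes $0$. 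This identifies the matrices inducing origin-fixing transformations with the embedded copy of $\Spe(1) \times \Spe(1)$, that is, $\Mbb(\B)_0 = \{\, F_A : A \in \Spe(1) \times \Spe(1) \,\}$. Finally $\Mbb(\B)_0$ is closed, being the preimage of $\{0\}$ under the continuous evaluation $F \mapsto F(0)$, so with smoothness and transitivity already in hand the standard orbit--stabilizer theorem for smooth transitive Lie group actions delivers the concluding reformulation of the statement.
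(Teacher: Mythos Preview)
Your proposal is correct and follows essentially the same route as the paper: smoothness is obtained by descent from the smooth $\Spe(1,1)$-action through the quotient by the discrete kernel $\Z_2 I_2$ (the paper phrases this as locally inverting the covering map, you phrase it as the universal property of surjective submersions---these are equivalent), transitivity is inherited from $\Spe(1,1)$, and the stabilizer is computed directly from \eqref{eq:FA}. Your version is somewhat more detailed (the explicit inverse $A^{-1}=I_{1,1}A^*I_{1,1}$, the non-vanishing of $qc+d$, and the entry-by-entry verification that $b=0$ forces $c=0$ and $|a|=|d|=1$), but the underlying argument is the same.
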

\begin{proof}
	As noted before, the classical M\"obius transformations define a right $\Spe(1,1)$-action. It then follows from \eqref{eq:FA} that this $\Spe(1,1)$-action is smooth. It is also well known that the $\Spe(1,1)$-action is transitive. 
	
	We refer to \cite[Proposition~1.94]{KnappBeyond} for further details on the next claims. Since $\Z_2 I_2$ is discrete, the map considered in Proposition~\ref{prop:AtoFAhomomorphism} yields a covering map $\Spe(1,1) \rightarrow \Mbb(\B)$. By locally inverting the latter, we see that the smoothness of the $\Spe(1,1)$-action implies the smoothness of the $\Mbb(\B)$-action. Also, the transitivity of the latter is clearly an immediate consequence of the transitivity of the former.
	
	Finally, a straightforward computation using \eqref{eq:FA} shows that $F_A \in \Mbb(\B)_0$ if and only if $A$ is diagonal with diagonal entries that necessarily belong to $\Spe(1)$.
\end{proof}

Given Proposition~\ref{prop:M(B)action}, it is immediate to diffeomorphically identify $\B$ with a quotient of $\Mbb(\B)$ by the stabilizer subgroup $\Mbb(\B)_0$. This can be achieved by considering the orbit map for the origin $0$. However, we will consider a similar, but non-standard, orbit map to obtain such quotient. This approach will be better suited later on as we will see.

\begin{proposition}\label{prop:BasM(B)quotient}
	For the $\Mbb(\B)$-action on $\B$, the \textbf{inverse orbit map} $\Mbb(\B) \rightarrow \B$ at $0$  defined by $F \mapsto F^{-1}(0)$ is smooth and induces a well defined map
	\begin{align*}
		\Mbb(\B)_0 \backslash \Mbb(\B) &\rightarrow \B  \\
			\Mbb(\B)_0 F &\mapsto F^{-1}(0),
	\end{align*}
	which is a diffeomorphism.
\end{proposition}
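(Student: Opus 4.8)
The plan is to produce the asserted diffeomorphism by composing the classical homogeneous-space identification $\Mbb(\B)/\Mbb(\B)_0 \cong \B$ with the inversion diffeomorphism of the Lie group $\Mbb(\B)$, which converts left cosets into right cosets. The smoothness of the inverse orbit map comes for free: since $\Mbb(\B)$ is a Lie group, inversion $\iota\colon F \mapsto F^{-1}$ is smooth, and by Proposition~\ref{prop:M(B)action} the evaluation $\Mbb(\B)\times\B\to\B$ is smooth, so in particular $F \mapsto F(0)$ is smooth; the map $F \mapsto F^{-1}(0)$ is the composition of these two, hence smooth.

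Next I would check that this map descends to the coset space. By Proposition~\ref{prop:M(B)action} the stabilizer $\Mbb(\B)_0$ is the image of the compact group $\Spe(1)\times\Spe(1)$, hence is compact and in particular closed in $\Mbb(\B)$; therefore $\Mbb(\B)_0\backslash\Mbb(\B)$ carries its canonical manifold structure and the projection $\pi$ onto it is a surjective submersion. If $\Mbb(\B)_0 G = \Mbb(\B)_0 F$, write $G = F_0\circ F$ with $F_0 \in \Mbb(\B)_0$; then $G^{-1} = F^{-1}\circ F_0^{-1}$, and since $F_0^{-1}$ fixes $0$ we get $G^{-1}(0) = F^{-1}(F_0^{-1}(0)) = F^{-1}(0)$. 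Thus $F \mapsto F^{-1}(0)$ is constant on the fibres of $\pi$ and factors through a smooth map $\Mbb(\B)_0\backslash\Mbb(\B)\to\B$.

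For bijectivity, given $q\in\B$, transitivity of the $\Mbb(\B)$-action provides $F$ with $F(q)=0$, so $F^{-1}(0)=q$ and the induced map is onto; and if $F^{-1}(0)=G^{-1}(0)$, then $(G\circ F^{-1})(0)=0$, so $G\circ F^{-1}\in\Mbb(\B)_0$ and $G = (G\circ F^{-1})\circ F\in\Mbb(\B)_0 F$, giving injectivity. To upgrade this bijection to a diffeomorphism, I would use that the orbit map $F\mapsto F(0)$ of a transitive smooth Lie group action is a submersion $\Mbb(\B)\to\B$; precomposing with the diffeomorphism $\iota$ shows $F\mapsto F^{-1}(0)$ is a submersion $\Mbb(\B)\to\B$ as well, so its factorization through the surjective submersion $\pi$ is itself a submersion. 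A bijective submersion of manifolds is a diffeomorphism (the dimension count $\dim\Mbb(\B)-\dim\Mbb(\B)_0 = 10 - 6 = 4 = \dim\B$ confirms equal dimensions), which finishes the argument. Equivalently, one observes that $\iota$ descends to a diffeomorphism $\Mbb(\B)_0\backslash\Mbb(\B)\to\Mbb(\B)/\Mbb(\B)_0$, $\Mbb(\B)_0 F\mapsto F^{-1}\Mbb(\B)_0$, and precomposes it with the standard diffeomorphism $\Mbb(\B)/\Mbb(\B)_0\to\B$, $F\Mbb(\B)_0\mapsto F(0)$.

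The argument is essentially formal: the only input that is not pure bookkeeping is the standard fact that the orbit map of a transitive smooth Lie group action is a submersion (equivalently, the homogeneous-space theorem identifying $\Mbb(\B)/\Mbb(\B)_0$ with $\B$), which may be quoted from any reference on Lie group actions. The one place demanding attention is the left/right convention: one must verify that the inverse orbit map is constant exactly along the cosets $\Mbb(\B)_0 F$ and not along $F\Mbb(\B)_0$. This is precisely the role played by the inversion $\iota$, and it explains why the non-standard (inverse) orbit map, rather than $F\mapsto F(0)$, is the natural one to use here and in the developments that follow.
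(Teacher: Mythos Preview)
Your proof is correct and follows essentially the same approach as the paper: both reduce the statement to the standard homogeneous-space identification by exploiting that inversion $\iota : F \mapsto F^{-1}$ is a diffeomorphism of the Lie group $\Mbb(\B)$. The paper phrases this as ``modify the proofs of Theorem~3.2 and Proposition~4.3 in \cite[Chapter~II]{Helgason} by replacing $g$ with $g^{-1}$,'' whereas you spell the argument out directly (well-definedness, bijectivity, submersion via dimension count) and then note the equivalent formulation via $\iota$ at the end; the content is the same.
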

\begin{proof}
	The result follows from Theorem~3.2 and Proposition~4.3 in \cite[Chapter~II]{Helgason} after a slight modification of their proofs as we now explain.
	
	With our current notation, it is easy to verify that the proof of \cite[Theorem~3.2,~Chapter~II]{Helgason} can be applied to the map $\Mbb(\B)_0 F \mapsto F^{-1}(0)$ to conclude that it is a homeomorphism. The main point is to replace $F$ with $F^{-1}$ or, in the case of a general Lie group $G$, an element $g$ with its inverse $g^{-1}$ (which is the notation of \cite{Helgason}). Because such assignment $F \mapsto F^{-1}$ is a diffeomorphism, the proof carries over without any essential change to conclude that $\Mbb(\B)_0 F \mapsto F^{-1}(0)$ defines indeed a homeomorphism.
	
	We now make use of \cite[Proposition~4.3,~Chapter~II]{Helgason}. Again we replace $F$ with $F^{-1}$ in the proof of the latter to consider the map $\Mbb(\B)_0 F \mapsto F^{-1}(0)$. As above and for the same reason, we can now conclude that such map is a diffeomorphism. Note that our modified version of \cite[Proposition~4.3,~Chapter~II]{Helgason} requires the map to be a homeomorphism (see the relevant statement in \cite[Chapter~II]{Helgason}), but this has been proved to hold in the previous paragraph.
	
	Collecting the claims from the two previous paragraphs, we conclude that the map $\Mbb(\B)_0 F \mapsto F^{-1}(0)$ yields a diffeomorphism $\Mbb(\B)_0 \backslash \Mbb(\B) \simeq \B$ as stated.
\end{proof}

Propositions~\ref{prop:AtoFAhomomorphism} and \ref{prop:BasM(B)quotient} provide successive quotients: $\Mbb(\B)$ as a quotient of $\Spe(1,1)$ and $\B$ as a quotient of $\Mbb(\B)$, respectively. This suggest that we should be able to write $\B$ as a quotient of $\Spe(1,1)$. Such is the content of the next result. Note that \cite[Theorem~4.2,~Chapter~II]{Helgason} implies that $(\Spe(1) \times \Spe(1))\backslash\Spe(1,1)$ admits a natural manifold structure. Once this has been observed, the statement below follows from Theorem~3.2 and Proposition~4.3 in \cite[Chapter~II]{Helgason} as a consequence of the transitivity of the $\Spe(1,1)$-action on $\B$ and the fact that this action has stabilizer at the origin realized by $\Spe(1) \times \Spe(1)$.

\begin{proposition}\label{prop:BasSp(1,1)quotient}
	The map given by
	\begin{align}\label{eq:BasSp(1,1)quotient}
		(\Spe(1) \times \Spe(1))\backslash\Spe(1,1)
			&\rightarrow \B  \\
			(\Spe(1) \times \Spe(1)) A 
			&\mapsto F_A(0), \notag
	\end{align}
	is a well defined diffeomorphism. 
\end{proposition}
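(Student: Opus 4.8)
The plan is to reduce the statement to the two quotient results already established, namely Proposition~\ref{prop:AtoFAhomomorphism} (realizing $\Mbb(\B)$ as $\Spe(1,1)/\Z_2 I_2$) and Proposition~\ref{prop:BasM(B)quotient} (realizing $\B$ as $\Mbb(\B)_0\backslash\Mbb(\B)$), by composing the two successive quotient maps and identifying the resulting subgroup on the left. First I would recall, as the paragraph preceding the statement does, that $\Spe(1) \times \Spe(1)$ is a closed subgroup of $\Spe(1,1)$, so by \cite[Theorem~4.2,~Chapter~II]{Helgason} the coset space $(\Spe(1)\times\Spe(1))\backslash\Spe(1,1)$ carries a canonical smooth manifold structure and the projection $\Spe(1,1) \to (\Spe(1)\times\Spe(1))\backslash\Spe(1,1)$ is a smooth submersion admitting local sections.

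Next I would verify that the map \eqref{eq:BasSp(1,1)quotient} is well defined. By Proposition~\ref{prop:M(B)action} the stabilizer of $0$ for the right $\Spe(1,1)$-action is exactly $\Spe(1)\times\Spe(1)$, i.e.\ $F_A(0) = 0$ iff $A \in \Spe(1)\times\Spe(1)$; combined with the cocycle-type identity \eqref{eq:FAB}, which gives $F_{BA}(0) = (F_A \circ F_B)(0) = F_A(F_B(0)) = F_A(0)$ whenever $B \in \Spe(1)\times\Spe(1)$, we see that $F_A(0)$ depends only on the coset $(\Spe(1)\times\Spe(1))A$. Transitivity of the $\Spe(1,1)$-action on $\B$ (Proposition~\ref{prop:M(B)action}) then makes the induced map surjective, and the stabilizer computation makes it injective. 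So it is a well-defined bijection.

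For smoothness and smoothness of the inverse — the real content — I would invoke the same machinery used in the proof of Proposition~\ref{prop:BasM(B)quotient}: the orbit map $\Spe(1,1) \to \B$, $A \mapsto F_A(0)$, is smooth by the explicit formula \eqref{eq:FA}, and it is constant on right cosets of $\Spe(1)\times\Spe(1)$, hence descends to a smooth map on the quotient by the universal property of the submersion $\Spe(1,1) \to (\Spe(1)\times\Spe(1))\backslash\Spe(1,1)$. To get that the inverse is smooth one applies the argument of \cite[Theorem~3.2 and Proposition~4.3,~Chapter~II]{Helgason} verbatim, the hypotheses being precisely transitivity of the action and the identification of the stabilizer at $0$ with $\Spe(1)\times\Spe(1)$, both supplied by Proposition~\ref{prop:M(B)action}. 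Alternatively, and perhaps cleaner, one composes: the covering $\Spe(1,1)/\Z_2 I_2 \xrightarrow{\sim} \Mbb(\B)$ of Proposition~\ref{prop:AtoFAhomomorphism} identifies $(\Spe(1)\times\Spe(1))\backslash\Spe(1,1)$ with $\Mbb(\B)_0\backslash\Mbb(\B)$ (note $\Z_2 I_2 \subset \Spe(1)\times\Spe(1)$ and $\Mbb(\B)_0$ is the image of $\Spe(1)\times\Spe(1)$), after which Proposition~\ref{prop:BasM(B)quotient} gives the diffeomorphism onto $\B$; one then checks that the composite carries $(\Spe(1)\times\Spe(1))A$ to $F_{(A^{-1})^{-1}}{}^{-1}(0) = F_A(0)$, matching \eqref{eq:BasSp(1,1)quotient}.

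The main obstacle is bookkeeping rather than mathematics: one must be careful with the inverses and with the left/right conventions, since \eqref{eq:FAB} makes $A\mapsto F_A$ an anti-homomorphism and Proposition~\ref{prop:BasM(B)quotient} is phrased via the \emph{inverse} orbit map $F\mapsto F^{-1}(0)$, whereas \eqref{eq:BasSp(1,1)quotient} is phrased via the plain evaluation $F_A(0)$. Tracking how the inverse on $\Spe(1,1)$ interacts with the two-sided passage to cosets — and confirming that $\Spe(1)\times\Spe(1)$, being invariant under $A \mapsto A^{-1}$, lets these conventions match up — is the one place where care is required; the smoothness statements themselves are immediate from Helgason's theorems once the set-theoretic identifications are pinned down.
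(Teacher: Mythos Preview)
Your proposal is correct and follows essentially the same approach as the paper: the paper's justification (given in the paragraph preceding the proposition) also invokes \cite[Theorem~3.2 and Proposition~4.3,~Chapter~II]{Helgason} directly, using the transitivity of the $\Spe(1,1)$-action and the identification of the stabilizer at $0$ with $\Spe(1)\times\Spe(1)$ from Proposition~\ref{prop:M(B)action}. Your alternative route via composing Propositions~\ref{prop:AtoFAhomomorphism} and~\ref{prop:BasM(B)quotient} is exactly what the paper carries out in the subsequent Remark~\ref{rmk:BasSp(1,1)quotient}.
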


\begin{remark}\label{rmk:BasSp(1,1)quotient}
	The diffeomorphism \eqref{eq:BasSp(1,1)quotient} is the usual one obtained from the $\Spe(1,1)$-action by classical M\"obius transformations. This corresponds to case CII, for $p=q=1$, in Table~V from \cite[pp.~518]{Helgason}. However, as noted above, such diffeomorphism can also be thought as coming from the quotients considered in Propositions~\ref{prop:AtoFAhomomorphism} and \ref{prop:BasM(B)quotient}. We now explain how to obtain \eqref{eq:BasSp(1,1)quotient} from these results. 
	
	First, by Proposition~\ref{prop:AtoFAhomomorphism} we have the isomorphism of Lie group
	\begin{align*}
		\Spe(1,1)/\Z_2 I_2 &\rightarrow \Mbb(\B)  \\
			A \Z_2 I_2 &\mapsto F_{A^{-1}},
	\end{align*}
	with respect to which Proposition~\ref{prop:M(B)action} shows that the subgroup corresponding to $\Mbb(\B)_0$ is $(\Spe(1) \times \Spe(1))/\Z_2 I_2$. If we now apply Proposition~\ref{prop:BasM(B)quotient}, then we obtain the diffeomorphism
	\begin{align}
		&\big((\Spe(1) \times \Spe(1))/\Z_2 I_2\big) \backslash
			\big(\Spe(1,1)/\Z_2 I_2\big) \rightarrow
		\Mbb(\B)_0 \backslash \Mbb(\B) \rightarrow \B  
				\label{eq:Sp(1,1)-Bdoublequotient-classical-1}	\\
		&\big((\Spe(1) \times \Spe(1))/\Z_2 I_2\big)\big(A \Z_2 I_2\big)
			\mapsto \Mbb(\B)_0 F_{A^{-1}} \mapsto 
				\big(F_{A^{-1}}\big)^{-1}(0) = F_A(0). \notag
	\end{align}
	On the other hand, the smooth right $\Spe(1,1)$-action on $(\Spe(1) \times \Spe(1))\backslash\Spe(1,1)$ (see \cite[Theorem~4.2,~Chapter~II]{Helgason}) factors to a smooth $\Spe(1,1)/\Z_2 I_2$-action on the same manifold. The reason is that $\Z_2 I_2$ is a central subgroup of $\Spe(1) \times \Spe(1)$, and so acts trivially on $(\Spe(1) \times \Spe(1))\backslash\Spe(1,1)$. Thus, we can apply Theorem~3.2 and Proposition~4.3 from \cite[Chapter~II]{Helgason} to conclude that the map
	\begin{align*}
		\big((\Spe(1) \times \Spe(1))/\Z_2 I_2\big) \backslash
			\big(\Spe(1,1)/\Z_2 I_2\big) &\rightarrow
		(\Spe(1) \times \Spe(1))\backslash\Spe(1,1)  \\
		\big((\Spe(1) \times \Spe(1))/\Z_2 I_2\big)(A \Z_2 I_2)
			&\mapsto (\Spe(1) \times \Spe(1))A
	\end{align*}
	is a diffeomorphism. Hence, \eqref{eq:Sp(1,1)-Bdoublequotient-classical-1} allows us to rewrite \eqref{eq:BasSp(1,1)quotient} as the diffeomorphism
	\begin{alignat}{2}
		(\Spe(1) \times \Spe(1))\backslash\Spe(1,1) &\rightarrow
			\Mbb(\B)_0 \backslash \Mbb(\B) &&\rightarrow \B 
				\label{eq:Sp(1,1)-Bdoublequotient-classical-2}  \\
		(\Spe(1) \times \Spe(1))A 
			&\mapsto \Mbb(\B)_0 F_{A^{-1}} &&\mapsto 
			\big(F_{A^{-1}}\big)^{-1}(0) = F_A(0).  \notag 
	\end{alignat}
	
	There are a couple of important observations derived from these computations. In the first place, and as claimed, we have shown that the canonical diffeomorphism \eqref{eq:BasSp(1,1)quotient} may be alternatively obtained through the two steps given by Propositions~\ref{prop:AtoFAhomomorphism} and \ref{prop:BasM(B)quotient}. This involves a non-standard construction because of the inverse $A^{-1}$ that appears in \eqref{eq:Sp(1,1)-Bdoublequotient-classical-2}. However, this sort of alternative choice will be better suited for our purposes latter on.
 	
 	On the other hand, the diffeomorphism \eqref{eq:BasSp(1,1)quotient} yields the usual Riemannian symmetric space structure of $\B$ that realizes the $1$-dimensional quaternionic hyperbolic and the $4$-dimensional real hyperbolic geometries. We refer to \cite{Helgason} for definitions and details (see also \cite{BisiStoppatoMobius,CaoParkerWang,ChenGreenberg}), but we shall now provide the Lie theoretic explanation of this fact.
	
	We have two natural smooth $\Spe(1,1)$-actions: one on $(\Spe(1) \times \Spe(1))\backslash \Spe(1,1)$ and the other on $\B$. For every $B \in \Spe(1,1)$, these actions give the transformations
	\begin{align*}
		(\Spe(1) \times \Spe(1))\backslash\Spe(1,1) &\rightarrow 
				(\Spe(1) \times \Spe(1))\backslash\Spe(1,1) &
			\B &\rightarrow \B  \\
		(\Spe(1) \times \Spe(1))A &\mapsto (\Spe(1) \times \Spe(1))AB &
			q &\mapsto F_B(q).
	\end{align*}
	The computation
	\[
		(\Spe(1) \times \Spe(1))AB \mapsto F_{AB}(0) 
			= F_B\big(F_A(0)\big)
	\]
	shows that, with respect to the $\Spe(1,1)$-actions above, the diffeomorphism \eqref{eq:BasSp(1,1)quotient} is $\Spe(1,1)$-equivariant. It is from this equivariance, together with the algebraic properties of the simple Lie group $\Spe(1,1)$, that the hyperbolic geometry on $\B$ is built. We refer to \cite[Proposition~3.4,~Chapter~IV]{Helgason} for further details. The latter introduces the construction of Riemannian structures on suitable quotients of (semi-)simple Lie groups by maximal compact subgroups, which includes our present case.
\end{remark}

\section{Analytic and geometric preliminaries}
\label{sec:RegularDiffGeom}
\subsection{Slice regularity}\label{subsec:sliceregularity}
It is well known that the appropriate notion of regularity for functions of a quaternionic variable allows to build very useful function theory and functional calculus. We will make use of the regularity notion introduced in \cite{GentiliStruppa2007} (see also~\cite{ColomboSabadiniStruppaFunctionalBook,ColomboSabadiniStruppaEntire,GentiliStoppatoStruppa2ndEd}), whose definition we now recall for our setup. In the rest of this work, we will denote by $\Sbb$ the $2$-dimensional sphere of elements $I \in \HH$ that satisfy $I^2 = -1$. These are the so-called quaternionic imaginary units.

\begin{definition}\label{def:sliceregular}
	A function $f : \B \rightarrow \HH$ is called (left) slice regular if, for every $I \in \Sbb$, we have
	\[
		\frac{1}{2} \bigg(
			\frac{\partial}{\partial x} + I \frac{\partial}{\partial y}
		\bigg) f(x + Iy) = 0,
	\]
	whenever $x,y \in \R$ are such that $x + yI \in \B$.
\end{definition}

We recall that the product, with respect to the operations in $\HH$, of slice regular functions is not necessarily slice regular. The same happens for the reciprocal of a function. However, it is possible to introduce a new product operation on the family of slice regular functions so as to obtain a real algebra with reciprocals, after restricting the domain on the complement of a suitable zero set. We now sketch the relevant basic facts and refer to \cite{ColomboSabadiniStruppaFunctionalBook,ColomboSabadiniStruppaEntire,GentiliStoppatoStruppa2ndEd} for further details. Note that we will restrict our discussion to the case of functions defined in $\B$.

Every slice regular function on $\B$ admits a power series expansion with right coefficients that converges uniformly on compact subsets of $\B$. If $f, g : \B \rightarrow \HH$ are slice regular with power series expansions
\[
	f(q) = \sum_{n=0}^\infty q^n a_n, \quad
	g(q) = \sum_{n=0}^\infty q^n b_n,
\]
then, its \textbf{(regular) $*$-product} is defined to be the function $f*g : \B \rightarrow \HH$ given by 
\[
	(f*g)(q) = \sum_{n=0}^\infty q^n c_n,
\]
where the coefficients $c_n$ are 
\[
	c_n = \sum_{k=0}^n a_k b_{n-k}.
\]
Then, $f*g$ is a well defined slice regular function on $\B$. Furthermore, the family of slice regular functions with the $*$-product and the usual pointwise addition turns out to be an algebra over $\R$. For $f$ as above, we introduce the \textbf{regular conjugate} as the function given by
\[
	f^c(q) = \sum_{n=0}^\infty q^n \overline{a}_n,
\]
for every $q \in \B$. The \textbf{symmetrization} of $f$ is now defined by
\[
	f^s = f * f^c = f^c * f,
\]
where the second identity is easily seen to hold. With these constructions at hand, the \textbf{regular reciprocal} or \textbf{$*$-reciprocal} of $f$ is the function $f^{-*}$ given by
\[
	f^{-*} = \frac{1}{f^s} f^c,
\]
which is defined on the complement in $\B$ of the zero set of $f^s$. The function $f^{-*}$ is slice regular and yields a reciprocal for $f$, with respect to the $*$-product, in the sense that
\[
	f^{-*} * f = f * f^{-*} = 1,
\]
on the complement of the zero set of $f^s$.

Using the $*$-product and the $*$-reciprocal, it was introduced in \cite{StoppatoMobius} a regular counterpart for classical M\"obius transformations that we now recall.

\begin{definition}\label{def:regularMobius}
	If $a, b \in \HH$ are given, then we will denote $l_{(a,b)}(q) = qa + b$, which defines a slice regular function. For every $A \in \Spe(1,1)$ of the form
	\[
		A = 
		\begin{pmatrix}
			a & c \\
			b & d
		\end{pmatrix},
	\]
	the \textbf{(slice) regular M\"obius transformation} associated to $A$ is defined by
	\begin{align*}
		\cF_A : \B &\longrightarrow \B  \\
			\cF_A &= l_{(c,d)}^{-*} * l_{(a,b)},
	\end{align*}
	which, for simplicity, is written as
	\[
		\cF_A(q) = (qc + d)^{-*}*(qa + b).
	\]
	We will also denote $\cM(\B) = \{ \cF_A : A \in \Spe(1,1)\}$, the family of all (slice) regular M\"obius transformations.
\end{definition}

The fundamental properties of regular M\"obius transformations have been developed in \cite{ColomboSabadiniStruppaFunctionalBook,GentiliStoppatoStruppa2ndEd,StoppatoMobius}. We will use the next result, which is a consequence of the theory found in \cite[section~9.4]{GentiliStoppatoStruppa2ndEd}.

\begin{proposition}[\cite{GentiliStoppatoStruppa2ndEd}]
	\label{prop:regularMobius}
	For every $A \in \Spe(1,1)$, the map $\cF_A : \B \rightarrow \B$ is a well defined regular bijection as well as a homeomorphism. Furthermore, $\cM(\B)$ is precisely the family of all slice regular bijections mapping $\B$ onto $\B$.
\end{proposition}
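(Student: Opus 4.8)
The plan is to prove Proposition~\ref{prop:regularMobius} by combining the explicit description of the $*$-product of the linear factors with the results of \cite[section~9.4]{GentiliStoppatoStruppa2ndEd}. The first step is to write $\cF_A$ in closed form. Since $l_{(c,d)}(q) = qc + d$ has at most one zero (it is a linear slice regular function), its symmetrization $l_{(c,d)}^s$ is a real-coefficient polynomial of degree at most $2$, and one computes directly that the zero set of $l_{(c,d)}^s$, when $\begin{pmatrix} a & c \\ b & d \end{pmatrix} \in \Spe(1,1)$, lies entirely outside $\overline{\B}$; this uses the relations $|d|^2 - |c|^2 = 1$ (and the off-diagonal relation $\overline{a}c = \overline{b}d$) coming from $A^* I_{1,1} A = I_{1,1}$. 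Hence $l_{(c,d)}^{-*}$ is defined on all of $\B$, so $\cF_A$ is a genuine slice regular function $\B \to \HH$, and it is slice regular as a $*$-product of slice regular functions.

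The second step is to show $\cF_A(\B) \subseteq \B$ and that $\cF_A$ is a bijection onto $\B$. Here I would invoke directly the computations of \cite[section~9.4]{GentiliStoppatoStruppa2ndEd}: on each slice $\B_I = \B \cap (\R + \R I)$, one can unwind the $*$-product using the representation formula, expressing $\cF_A|_{\B_I}$ in terms of an ordinary (complex) Möbius-type transformation composed with the slice structure, and read off that $|\cF_A(q)| < 1$ for $|q| < 1$, with boundary behavior $|\cF_A(q)| = 1$ for $|q| = 1$. Bijectivity onto $\B$ and the homeomorphism property follow from the same source, since $\cF_A$ admits a slice regular inverse built from $A^{-1}$ (the $*$-inverse of a linear-fractional slice regular expression is again of the same type); continuity of $\cF_A$ and of its inverse is automatic from slice regularity (each is continuous on $\B$, being real-analytic on each slice and continuous across slices by the representation formula), giving the homeomorphism claim.

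The third step is the converse containment: every slice regular bijection $\B \to \B$ lies in $\cM(\B)$. This is the substantive direction, and it is exactly the content of the classification theorem in \cite[section~9.4]{GentiliStoppatoStruppa2ndEd}: a slice regular self-bijection of $\B$ must, by a Schwarz-lemma-type argument adapted to the $*$-product, have the form $(1 - q\overline{a})^{-*} * (q - a) u$ for some $a \in \B$ and $u \in \Spe(1)$, and such an expression is precisely $\cF_A$ for a suitable $A \in \Spe(1,1)$ (one exhibits the matrix explicitly, e.g.\ a scalar multiple of $\begin{pmatrix} 1 & -\overline{a} \\ -a & 1 \end{pmatrix}$ times a diagonal unitary factor, and checks it satisfies $A^* I_{1,1} A = I_{1,1}$). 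I would therefore cite this classification rather than reprove it.

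The main obstacle is the third step — establishing that the list $\cM(\B)$ is exhaustive among all slice regular bijections of $\B$ — since this requires the full strength of the regular Schwarz lemma and the structure theory of the $*$-product, and is not a routine computation; fortunately it is available in \cite[section~9.4]{GentiliStoppatoStruppa2ndEd}, so the proof reduces to assembling these ingredients and verifying the $\Spe(1,1)$-membership of the relevant matrices, which is a direct check using the defining relations of $\Spe(1,1)$.
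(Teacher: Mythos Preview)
Your proposal is correct and follows essentially the same approach as the paper: the paper does not prove this proposition at all but simply states it as ``a consequence of the theory found in \cite[section~9.4]{GentiliStoppatoStruppa2ndEd},'' and your proof likewise reduces each step to the results in that source. Your additional exposition (checking that the zero set of $l_{(c,d)}^s$ lies outside $\overline{\B}$ via the $\Spe(1,1)$ relations, and exhibiting the explicit matrix realizing a given $(a,u)$) is reasonable elaboration but goes beyond what the paper provides, since the paper treats the proposition as an imported black box.
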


\begin{remark}\label{rmk:regMobius-composition}
	An important feature to take into account when dealing with slice regular functions is that their compositions are, in general, no longer slice regular. The simplest of examples can be obtained from the functions $q \mapsto q^2$ and $q \mapsto qa$, when $a \in \HH\setminus \R$. Similarly, $\cM(\B)$ is not closed under the composition of maps. To see this, let $-1 < a < 1$ and $u \in \Spe(1)$ be given, and consider the classical M\"obius transformations $f,g \in \Mbb(\B)$ defined~by
	\[
		f(q) = (1 - qa)^{-1} (q - a), \quad
		g(q) = qu.
	\]
	In particular, both are bijections of $\B$. It is easy to see that both $f,g$ are slice regular and so Proposition~\ref{prop:regularMobius} implies that $f,g \in \cM(\B)$. The choice of $a$ as real number allows to compute
	\[
		(f\circ g) (q) = \sum_{n=0}^\infty (qu)^n a^n (qu - a)
			= -a + \sum_{n=1}^\infty (qu)^n a^{n-1} (1 - a^2),
	\]
	for every $q \in \B$. Hence, we conclude that $f \circ g$ is slice regular only for $u = \pm 1$. It follows that $f \circ g \not\in \cM(\B)$ whenever we choose $u \in \Spe(1) \setminus \{\pm 1\}$. Hence, $\cM(\B)$ is not a group under the composition of maps.
	
	Furthermore, the assignment $A \mapsto \cF_A$, where $A \in \Spe(1,1)$, does not satisfy the analogue of \eqref{eq:FAB}. As a matter of fact, the previous example proves the existence of $A, B \in \Spe(1,1)$ such that $\cF_B \circ \cF_A \not \in \cM(\B)$, and so $\cF_B \circ \cF_A \not= \cF_C$ for every $C \in \Spe(1,1)$.
	
	On the other hand, it is important to recall that there is a quite general setup where one can ensure the slice regularity of the composition $f\circ g$ of two slice regular functions $f,g : \B \rightarrow \B$. The slice regularity of $f \circ g$ holds whenever $g$ is a so-called real function or quaternionic intrinsic function, i.e.~when $g$ admits a power series expansion with real coefficients (see~\cite[Sections~2.4~and~2.5]{ColomboSabadiniStruppaEntire} for further details).
\end{remark}

The next result is a consequence of the proof of \cite[Proposition~9.10]{GentiliStoppatoStruppa2ndEd}. Note that the latter is stated for slice regular M\"obius transformations defined over $\HH$ (minus a zero set) and the following considers those over $\B$. However, the same arguments used in \cite{GentiliStoppatoStruppa2ndEd} applies for our case. Also, observe that 
\[
	\HH I_2 \cap \Spe(1,1) = \Spe(1)I_2,
\] 
and this allows us to assume that $u \in \Spe(1)$ in the statement below from the theory developed in \cite{GentiliStoppatoStruppa2ndEd}.

\begin{proposition}[\cite{GentiliStoppatoStruppa2ndEd}]
	\label{prop:A=uB}
	For $A, B \in \Spe(1,1)$, we have $\cF_A = \cF_B$ if and only if there exists $u \in \Spe(1)$ such that $A = u B$.
\end{proposition}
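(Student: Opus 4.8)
The plan is to prove both implications by a direct comparison of the power series expansions of $\cF_A$ and $\cF_B$. The ``if'' direction is the easy one: given $A = uB$ with $u \in \Spe(1)$, write $A = \begin{pmatrix} a & c \\ b & d\end{pmatrix}$ so that $uB = \begin{pmatrix} ua' & uc' \\ ub' & ud'\end{pmatrix}$ where $B = \begin{pmatrix} a' & c' \\ b' & d'\end{pmatrix}$. Then $l_{(c,d)}(q) = qc + d = q(uc') + ud' = (qu\text{-side?})$... more carefully, one notes $l_{(uc',ud')}(q) = q(uc') + ud'$, and compares with $u\cdot l_{(c',d')}(q) = u(qc' + d')$. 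These are \emph{not} equal because $u$ does not commute with $q$ in general, so I would instead work directly from the definition $\cF_{uB} = l_{(uc',ud')}^{-*} * l_{(ua',ub')}$. The key computation is to show that left-multiplying all four entries by the \emph{same} unit quaternion $u$ does not change the associated regular M\"obius transformation; this follows because in the $*$-product formalism a common left factor $u$ on the numerator and denominator cancels via $(u\,l_{(c',d')})^{-*} = l_{(c',d')}^{-*}\,u^{-1}$ at the level of the slice regular operations (using that $u$ is a nonzero constant, hence a real/intrinsic factor only after conjugation—this needs the explicit series identities for $f^c$, $f^s$, and $f^{-*}$ applied to $u f$). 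I expect this to reduce cleanly to $\cF_{uB} = \cF_B$.

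For the ``only if'' direction, assume $\cF_A = \cF_B$ as functions on $\B$. The strategy I would follow is the one indicated in the hint: invoke the argument behind \cite[Proposition~9.10]{GentiliStoppatoStruppa2ndEd}. Concretely, from $\cF_A = \cF_B$ one gets $l_{(c,d)}^{-*} * l_{(a,b)} = l_{(c',d')}^{-*} * l_{(a',b')}$, and $*$-multiplying on the left by $l_{(c,d)}$ and on the right by $l_{(c',d')}^{-*}$ (legitimate on the complement of the relevant symmetrization zero sets, which are discrete) yields $l_{(a,b)} * l_{(c',d')}^{c} \cdot (\text{scalar}) = \dots$; more cleanly, cross-multiplying gives an identity of the form $l_{(c',d')} * l_{(a,b)} = l_{(c,d)} * l_{(a',b')}$ after clearing denominators, since the symmetrizations $l^s$ are real-valued slice functions and commute with everything. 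Both sides are slice regular polynomials of degree $\le 2$, so equating coefficients of $1$, $q$, $q^2$ gives a finite linear system relating $(a,b,c,d)$ and $(a',b',c',d')$.

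The main obstacle will be extracting from that coefficient comparison the precise conclusion $A = uB$ for a \emph{single} $u \in \Spe(1)$ common to all four entries, rather than merely a looser relation. Here is where the pseudo-unitarity constraint $A^* I_{1,1} A = I_{1,1}$ (and likewise for $B$) must be used: the raw polynomial identity will only pin down $A$ up to some ambiguity, and the group condition—equivalently the normalization $|a|^2 - |b|^2 = 1$, $|d|^2 - |c|^2 = 1$, $\bar a c = \bar b d$, etc.—rigidifies this to left-multiplication by a unit. I would handle this by first treating the generic case where, say, $c \ne 0$ or $d \ne 0$ so that the denominator is genuinely nonconstant, deduce that the ratio of leading behaviors forces $c' = uc$, $d' = ud$ for some $u \in \HH\setminus\{0\}$, then propagate through the numerator identity to get $a' = ua$, $b' = ub$ with the \emph{same} $u$, and finally observe $A = uB$ together with $A, B \in \Spe(1,1)$ forces $u\bar u = 1$, i.e.\ $u \in \Spe(1)$, using the remark $\HH I_2 \cap \Spe(1,1) = \Spe(1) I_2$ quoted just before the statement. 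The degenerate cases ($c = d = 0$ impossible since then the denominator vanishes; one of $a,b,c,d$ zero) are checked separately and are routine.
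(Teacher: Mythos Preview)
The paper does not give its own proof of this proposition; it simply cites \cite[Proposition~9.10]{GentiliStoppatoStruppa2ndEd}, remarks that the argument there carries over from $\HH$ to $\B$, and uses the observation $\HH I_2 \cap \Spe(1,1) = \Spe(1) I_2$ to force $u \in \Spe(1)$. So there is no detailed argument to compare against. Your sketch is a plausible reconstruction of what such a proof might look like, and the ``if'' direction is essentially right once the notation is cleaned up: the identity you need is $l_{(uc,ud)}^{-*} = l_{(c,d)}^{-*}\,\bar u$ (pointwise right multiplication), which holds for $|u|=1$ because $l^s_{(uc,ud)} = l^s_{(c,d)}$ and $l^c_{(uc,ud)} = l^c_{(c,d)}\,\bar u$; the trailing $\bar u$ then cancels against the leading $u$ in the coefficients of $l_{(ua,ub)}$ inside the $*$-product.

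There is, however, a concrete error in your ``only if'' direction. Cross-multiplying $l_{(c,d)}^{-*}*l_{(a,b)} = l_{(c',d')}^{-*}*l_{(a',b')}$ does \emph{not} yield the degree-two identity $l_{(c',d')}*l_{(a,b)} = l_{(c,d)}*l_{(a',b')}$. Since $f^{-*} = (f^s)^{-1}f^c$ and only the symmetrizations $f^s$ are central in the $*$-algebra, clearing denominators produces
\[
l^s_{(c',d')}\,\bigl(l^c_{(c,d)}*l_{(a,b)}\bigr) \;=\; l^s_{(c,d)}\,\bigl(l^c_{(c',d')}*l_{(a',b')}\bigr),
\]
an identity between polynomials of degree $\le 4$ in which the regular conjugates $l^c$ appear, not the original linear forms. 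One can still compare coefficients, but the resulting system is larger than you indicate, and extracting a \emph{single} $u$ with $A = uB$ from it is exactly where the substance of \cite[Proposition~9.10]{GentiliStoppatoStruppa2ndEd} lies; your sketch skips over this. Your final step---once $A = uB$ is known for some $u \in \HH\setminus\{0\}$, the membership $A, B \in \Spe(1,1)$ forces $u \in \Spe(1)$ via $\HH I_2 \cap \Spe(1,1) = \Spe(1) I_2$---is correct and matches the paper's remark verbatim.
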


\subsection{Principal fiber bundles}
\label{subsec:principalbundles}
The geometric constructions that we will apply make use of some well-known notation and results from differential geometry. We recollect here those that will be needed for the rest of this work and refer to \cite{KNvol1,Sharpe} for further details and proofs.

\begin{definition}\label{def:principalbundle}
	A principal fiber bundle with $G$ as structure Lie group is an ordered collection $(\pi, P, M)$, where $P$ and $M$ are smooth manifolds and $\pi : P \rightarrow M$ is a map. The following conditions are also required to be satisfied.
	\begin{enumerate}
		\item The map $\pi$ is a submersion, i.e.~a surjective smooth map whose differential at every point is surjective.
		\item There is a free and proper smooth right $G$-action on $P$ whose orbits are precisely the fibers of $\pi$.
		\item For every $p \in M$, there is an open subset $U \subset M$ containing $p$ and a $G$-equivariant diffeomorphism $\varphi_U : U \times G \rightarrow \pi^{-1}(U)$, for the right $G$-action on the first factor for the domain, such that the diagram
		\[
		\xymatrix{
			U \times G \ar[rr]^-{\varphi_U} \ar[dr]^-{\pi_1} & 
						& \pi^{-1}(U) \ar[dl]_-{\pi} \\
				& U &
		}
		\]
		is commutative, where $\pi_1$ is the projection onto the first factor.
	\end{enumerate}
\end{definition}

	Note that we have used right actions to define principal fiber bundles. However, there is an obvious corresponding notion for the case of left actions. We will have the opportunity to use both.

	For simplicity, one usually identifies a principal fiber bundle, as in Definition~\ref{def:principalbundle}, with the map $\pi$ from $P$ onto $M$. In particular, we will speak of the principal fiber bundle $P \rightarrow M$ with structure group $G$, where the map from $P$ onto $M$ and the $G$-action have been specified and satisfy the conditions from Definition~\ref{def:principalbundle}. In this situation, the manifolds $P$ and $M$ are called the total and base spaces of the principal fiber bundle, respectively.
	
	We recall that an action is free when the identity element is the only one that has a fixed point. Also, a smooth $G$-action on a manifold $P$ is called proper if, for every compact subset $K \subset P$, the set
	\[
		\{g \in G : gK \cap K \not=\emptyset\},
	\]
	is compact in $G$. In particular, every smooth action of a compact Lie group is proper.

	Condition (3) from Definition~\ref{def:principalbundle} is referred by saying that $\pi : P \rightarrow M$ is locally trivial as a principal bundle. This property entitles the existence of local sections as stated in the next result.
	
	\begin{proposition}\label{prop:localsections}
		Let $\pi : P \rightarrow M$ be a principal fiber bundle with structure group $G$. Then, for every $p \in M$ there exists an open neighborhood $U \subset M$ of $p$ and a smooth map $\sigma : U \rightarrow P$ such that $\pi \circ \sigma = Id_U$, the identity map in $U$. The map $\sigma$ is called a local section of $\pi$.
	\end{proposition}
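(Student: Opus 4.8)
The plan is to read the section off directly from the local triviality condition~(3) in Definition~\ref{def:principalbundle}, which is precisely the hypothesis that encodes the existence of local sections. Given $p \in M$, that condition supplies an open neighborhood $U \subset M$ of $p$ together with a $G$-equivariant diffeomorphism $\varphi_U : U \times G \rightarrow \pi^{-1}(U)$ making the triangle in Definition~\ref{def:principalbundle} commute, i.e.\ $\pi \circ \varphi_U = \pi_1$, where $\pi_1 : U \times G \rightarrow U$ is the projection onto the first factor. Let $e \in G$ be the identity element, and define
\[
	\sigma : U \rightarrow P, \qquad \sigma(x) = \varphi_U(x, e).
\]

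First I would check that $\sigma$ is smooth: the inclusion $x \mapsto (x,e)$ is a smooth map $U \rightarrow U \times G$, and $\varphi_U$ is smooth by hypothesis, so $\sigma$ is a composition of smooth maps; moreover it takes values in the open subset $\pi^{-1}(U) \subset P$, hence it is a legitimate smooth map into $P$. Next I would verify the section identity by chasing the commutative diagram: for every $x \in U$ we have
\[
	\pi(\sigma(x)) = \pi\big(\varphi_U(x,e)\big) = \pi_1(x,e) = x,
\]
so that $\pi \circ \sigma = Id_U$, which is exactly the required property.

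There is no real obstacle here, since the statement is essentially a restatement of condition~(3) of Definition~\ref{def:principalbundle}; the only points to be careful about are to fix a specific element of $G$ (the identity $e$ is the natural choice, though any fixed element would produce a section) and to note that $\varphi_U$ lands in $\pi^{-1}(U)$, an open submanifold of $P$, so that no regularity is lost in viewing $\sigma$ as a map into $P$.
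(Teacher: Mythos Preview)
Your proof is correct and is the standard argument. Note, however, that the paper does not actually supply a proof for this proposition: it is one of the background facts collected in subsection~\ref{subsec:principalbundles} with the proofs deferred to the references \cite{KNvol1,Sharpe}, and the text preceding the statement simply remarks that local triviality ``entitles the existence of local sections.'' Your construction $\sigma(x) = \varphi_U(x,e)$ is exactly how one would fill in this omitted detail.
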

	
	Finally, we have the next result that allows us to construct principal fiber bundles from smooth free proper actions. For its proof we refer to \cite[Appendix~E]{Sharpe}.
	
	\begin{proposition}\label{prop:buildprincipalbundle}
		Let $P$ be a smooth manifold that admits a smooth right $G$-action, where $G$ is a Lie group. If the $G$-action on $P$ is free and proper, then the following holds.
		\begin{enumerate}
			\item The space $M = P/G$ with the quotient topology is metrizable Hausdorff and admits a unique manifold structure such that the quotient map $\pi : P \rightarrow M$ is a submersion.
			\item The family $(\pi, P, M)$ is a principal fiber bundle with structure group $G$.
		\end{enumerate}
		Furthermore, we have $\dim M = \dim P - \dim G$.
	\end{proposition}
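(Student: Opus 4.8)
The plan is to prove this as the standard quotient-manifold (slice) theorem for free proper actions, organized in three stages: the topology of $M = P/G$, the construction of slice charts, and the verification of the bundle axioms together with the dimension count.

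First I would dispose of the topological claims. The quotient map $\pi : P \to P/G$ is open, since for open $U \subseteq P$ the saturation $\pi^{-1}(\pi(U)) = \bigcup_{g \in G} U\cdot g$ is open; hence $M$ inherits second countability from $P$. For the Hausdorff property I would use properness in the form: the map $G \times P \to P \times P$, $(g,p) \mapsto (p\cdot g, p)$, is proper (this is the hypothesis) and has locally compact Hausdorff target, hence is closed, so its image — the orbit relation $R \subseteq P \times P$ — is closed; an open equivalence relation with closed graph has Hausdorff quotient. Metrizability then follows from the Urysohn metrization theorem, once $M$ is known to be locally Euclidean (which the next stage supplies), since it is second countable and Hausdorff.

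Second, the geometric core: slices. Fix $p \in P$. Freeness makes the orbit map $\theta_p : G \to P$, $g \mapsto p\cdot g$, an injective immersion — a fundamental vector field vanishing at $p$ would generate a one-parameter subgroup fixing $p$, hence be trivial — and a short argument from properness (apply it to $K \cup \{p\}$ for $K \subseteq P$ compact) shows $\theta_p$ is itself proper, so the orbit $G\cdot p$ is a closed embedded submanifold diffeomorphic to $G$. Choose a submanifold $S_0 \ni p$ with $T_p S_0$ complementary to $T_p(G\cdot p)$ in $T_p P$; then $F : S_0 \times G \to P$, $(s,g)\mapsto s\cdot g$, has bijective differential at $(p,e)$, and since $F(s, gh) = F(s,g)\cdot h$ it is a local diffeomorphism near $S_0 \times \{g\}$ for every $g$, hence on $S_1 \times G$ for a possibly smaller neighborhood $S_1$ of $p$. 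The hard part will be upgrading this to \emph{injectivity} of $F$ on $S \times G$ for a still smaller slice $S$, and this is where properness is genuinely needed: if injectivity failed on every neighborhood of $p$, one would obtain sequences $s_k, s_k' \to p$ and $g_k \neq g_k'$ with $s_k\cdot g_k = s_k'\cdot g_k'$; setting $h_k = g_k (g_k')^{-1}$ and applying properness to the compact set $\{p\}\cup\{s_k : k\}\cup\{s_k' : k\}$ forces a subsequence $h_k \to h$ with $p\cdot h = p$, so $h = e$ by freeness, whereupon $(s_k, h_k)$ and $(s_k', e)$ eventually lie in a neighborhood of $(p,e)$ on which $F$ is already injective, contradicting $F(s_k, h_k) = s_k' = F(s_k', e)$. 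Thus $F : S \times G \to U_p := S\cdot G$ is a diffeomorphism onto an open saturated subset of $P$, with $\pi^{-1}(\pi(S)) = U_p$.

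Third, I would assemble the structures. Each $\pi|_S : S \to \pi(S)$ is a homeomorphism onto an open subset of $M$ (injectivity of $F$ forces injectivity of $\pi|_S$, and openness of $\pi$ gives the rest), so transporting the smooth structure of the submanifold $S$ along these maps defines charts on $M$; on overlaps the transition maps are, in slice coordinates, compositions of $F$, of some $F'^{-1}$, and of the relevant projections, hence smooth, and they endow $M$ with a smooth manifold structure. Uniqueness of the structure for which $\pi$ is a submersion is standard (a submersion admits local sections, through which any two such structures are identified). In these coordinates $\pi$ is the first projection $S \times G \to S$, so it is a submersion — this is axiom (1); axiom (2) is the hypothesis together with the evident identification of orbits with fibers of $\pi$; and axiom (3) is witnessed by taking $U = \pi(S)$ and $\varphi_U := F : U \times G \to \pi^{-1}(U)$, which is $G$-equivariant and makes the required triangle commute. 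Finally, reading the chart $S$ off gives $\dim M = \dim S = \dim P - \dim(G\cdot p) = \dim P - \dim G$, since $G\cdot p \cong G$.
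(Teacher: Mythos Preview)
Your proof is correct and follows the standard slice-theorem approach; there is no gap. However, the paper does not give its own proof of this proposition: it simply states the result and refers to \cite[Appendix~E]{Sharpe}. So there is no ``paper's proof'' to compare against beyond noting that what you have written is precisely the kind of argument one finds in that reference (construct a local slice transverse to the orbit, use properness to shrink it until the tube map $S\times G \to P$ is injective, and read off the charts and local trivializations).
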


We observe that there is an obvious corresponding result for left $G$-actions, where one now considers $M = G\backslash P$.

A particular example that will be useful can be described as follows. Let $G$ be a Lie group and $H$ a closed subgroup (hence a Lie subgroup). By the smoothness of the product of $G$, both the left and right actions of $H$ on $G$ are smooth. These actions are clearly free, and it is easy to prove that they are also proper as a consequence of the fact that $H$ is closed. Hence, the quotients $G/H$ and $H\backslash G$ both have manifold structures with the properties that follow from Proposition~\ref{prop:buildprincipalbundle}.

\section{Slice regular M\"obius transformations}
\subsection{Slice regular M\"obius transformations as a manifold}
Our first main result proves the existence of a manifold structure on $\cM(\B)$ induced from the Lie group structure of $\Spe(1,1)$.

\begin{theorem}
	\label{thm:cM(B)manifold}
	The mapping 
	\begin{align*}
		\Spe(1,1)/\Spe(1)I_2 &\rightarrow \cM(\B)  \\
			A\Spe(1)I_2 &\mapsto \cF_{A^{-1}},
	\end{align*}
	is a well defined bijection of sets. In particular, $\cM(\B)$ has a unique manifold structure for which this bijection is a diffeomorphism. Furthermore, $\cM(\B)$ has dimension $7$ as a manifold for this structure.
\end{theorem}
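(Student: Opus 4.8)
The plan is to establish the three assertions in turn: the well-definedness and bijectivity of the displayed map, then the (essentially automatic) transport of manifold structure, and finally the dimension count. The first is where all the genuine content lies, and it rests on Proposition~\ref{prop:A=uB} together with careful bookkeeping of the inverse and of left-versus-right quaternionic multiplication.

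For well-definedness and injectivity I would argue as follows. Suppose $A\Spe(1)I_2 = B\Spe(1)I_2$; then $A = B(uI_2)$ for some $u \in \Spe(1)$, hence $A^{-1} = (uI_2)^{-1}B^{-1} = (\overline{u}\,I_2)B^{-1}$, that is, $A^{-1} = \overline{u}\,B^{-1}$ with $\overline{u} \in \Spe(1)$ acting by scalar multiplication on the left. Proposition~\ref{prop:A=uB} then gives $\cF_{A^{-1}} = \cF_{B^{-1}}$, so the assignment is well defined on cosets. Running the same equivalences backwards proves injectivity: if $\cF_{A^{-1}} = \cF_{B^{-1}}$, then $A^{-1} = u B^{-1}$ for some $u \in \Spe(1)$ by Proposition~\ref{prop:A=uB}, whence $A = B\,\overline{u} = B(\overline{u}\,I_2) \in B\Spe(1)I_2$, so the cosets coincide. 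Surjectivity is immediate from $\cM(\B) = \{\cF_C : C \in \Spe(1,1)\}$, since $\cF_C = \cF_{(C^{-1})^{-1}}$ is the image of the coset $C^{-1}\Spe(1)I_2$. It is worth stressing here the role of the inverse and of the one-sided coset: because $\Spe(1)I_2$ is not normal (not even central) in $\Spe(1,1)$ — the quaternions not being commutative — the passage $A \mapsto A^{-1}$ is precisely what converts left cosets of $\Spe(1)I_2$ into the left scalar factor that appears in Proposition~\ref{prop:A=uB}.

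For the manifold structure I would invoke the standard homogeneous-space construction recorded after Proposition~\ref{prop:buildprincipalbundle}. The subgroup $\Spe(1)I_2$ is compact (it is isomorphic to $\Spe(1) \cong S^3$), hence closed in $\Spe(1,1)$, so its right translation action on $\Spe(1,1)$ is free and proper; consequently $\Spe(1,1)/\Spe(1)I_2$ carries a canonical smooth manifold structure. Transporting this structure along the bijection of the previous paragraph equips $\cM(\B)$ with a manifold structure for which that bijection is a diffeomorphism, and uniqueness is automatic, since a bijection out of a fixed smooth manifold admits at most one manifold structure on the target making it a diffeomorphism (pull back an atlas). The dimension statement then follows from $\dim \Spe(1,1) = 10$ and $\dim \Spe(1)I_2 = \dim S^3 = 3$, so that Proposition~\ref{prop:buildprincipalbundle} yields $\dim\big(\Spe(1,1)/\Spe(1)I_2\big) = 10 - 3 = 7$, hence $\dim \cM(\B) = 7$.

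I do not expect a real obstacle here. The only delicate point is tracking the inverse and the order of multiplication through the non-commutative quaternion arithmetic in the bijectivity argument, and that is exactly what Proposition~\ref{prop:A=uB} and the deliberate choice of the map $A\Spe(1)I_2 \mapsto \cF_{A^{-1}}$ (rather than $A\mapsto\cF_A$) are designed to make clean.
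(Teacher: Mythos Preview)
Your proposal is correct and follows essentially the same approach as the paper: the paper's proof also reduces the bijection to Proposition~\ref{prop:A=uB} via the computation $\cF_{A^{-1}} = \cF_{B^{-1}} \iff A^{-1} = uB^{-1} \iff A = Bu^{-1}$, and then invokes Proposition~\ref{prop:buildprincipalbundle} (with $\Spe(1)I_2$ closed in $\Spe(1,1)$) for the manifold structure and the dimension count $10-3=7$. Your version is simply a bit more expansive in separating well-definedness, injectivity, and surjectivity, but the content is identical.
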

\begin{proof}
	By Proposition~\ref{prop:A=uB}, for $A, B \in \Spe(1,1)$ we have $\cF_{A^{-1}} = \cF_{B^{-1}}$ if and only if there exists $u \in \Spe(1)$ such that $A = (A^{-1})^{-1} = (u B^{-1})^{-1} = B u^{-1}$. This shows that the mapping is indeed a well defined bijection. Since $\Spe(1) I_2$ is a closed subgroup of $\Spe(1,1)$, by the discussion at the end of subsection~\ref{subsec:principalbundles}, Proposition~\ref{prop:buildprincipalbundle} applies and the quotient $\Spe(1,1)/\Spe(1)I_2$ admits a unique manifold structure
	so that the quotient map from $\Spe(1,1)$ is a submersion. This yields on $\cM(\B)$ a corresponding manifold structure through the given bijection. Finally, by construction, the manifold $\cM(\B)$ satisfies
	\[
		\dim \cM(\B) = \dim \Spe(1,1)/\Spe(1)I_2
			= \dim \Spe(1,1) - \dim \Spe(1)
			= 10 - 3 = 7,
	\]
	thus completing the proof.
\end{proof}

From now on, we will consider $\cM(\B)$ with the manifold structure obtained from Theorem~\ref{thm:cM(B)manifold}. The proof of the latter, which uses Proposition~\ref{prop:buildprincipalbundle}, implies the next result.

\begin{corollary}\label{cor:Sp(1,1)cM(B)PrincipalFiberBundle}
	The assignment given by
	\begin{align*}
		\Spe(1,1) &\rightarrow \cM(\B)  \\
			A &\mapsto \cF_{A^{-1}},
	\end{align*}
	yields a principal fiber bundle with total space $\Spe(1,1)$, base space $\cM(\B)$ and structure group $\Spe(1)$, where the latter acts on $\Spe(1,1)$ by right translations.
\end{corollary}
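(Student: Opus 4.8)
The plan is to deduce Corollary~\ref{cor:Sp(1,1)cM(B)PrincipalFiberBundle} directly from the machinery already assembled in the proof of Theorem~\ref{thm:cM(B)manifold}, rather than verifying the three axioms of Definition~\ref{def:principalbundle} by hand. The right $\Spe(1)$-action on $\Spe(1,1)$ in question is $(A, u) \mapsto A(uI_2)$, i.e.\ right translation by the closed subgroup $\Spe(1)I_2$; by the discussion at the end of subsection~\ref{subsec:principalbundles} this action is smooth, free, and proper (freeness because $AuI_2 = A$ forces $u = 1$; properness because $\Spe(1)I_2$ is closed — indeed compact). Hence Proposition~\ref{prop:buildprincipalbundle} applies and gives a principal fiber bundle $\Spe(1,1) \to \Spe(1,1)/\Spe(1)I_2$ with structure group $\Spe(1)$, where the orbit space carries its unique manifold structure making the quotient map a submersion.

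Next I would identify this bundle with the stated assignment $A \mapsto \cF_{A^{-1}}$. By Theorem~\ref{thm:cM(B)manifold} the map $\beta\colon \Spe(1,1)/\Spe(1)I_2 \to \cM(\B)$, $A\Spe(1)I_2 \mapsto \cF_{A^{-1}}$, is a diffeomorphism (this is exactly how the manifold structure on $\cM(\B)$ was defined). Therefore the composition
\[
	\Spe(1,1) \xrightarrow{\ q\ } \Spe(1,1)/\Spe(1)I_2 \xrightarrow{\ \beta\ } \cM(\B),
	\qquad A \mapsto A\Spe(1)I_2 \mapsto \cF_{A^{-1}},
\]
of the quotient map $q$ with the diffeomorphism $\beta$ equals the assignment $A \mapsto \cF_{A^{-1}}$. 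Since precomposing (or here postcomposing) a principal bundle projection with a diffeomorphism of the base yields again a principal bundle with the same structure group and the same $G$-action on the total space — the submersion condition, the orbit condition, and local triviality are all transported verbatim through $\beta$ — we conclude that $A \mapsto \cF_{A^{-1}}$ is a principal $\Spe(1)$-bundle with total space $\Spe(1,1)$ and base space $\cM(\B)$.

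I do not anticipate a serious obstacle here: the work has effectively been done in Theorem~\ref{thm:cM(B)manifold}, and the corollary is a repackaging. The one point that deserves an explicit sentence is the compatibility of fibers with $\Spe(1)$-orbits, namely that two matrices $A, B \in \Spe(1,1)$ have $\cF_{A^{-1}} = \cF_{B^{-1}}$ exactly when they lie in the same right coset of $\Spe(1)I_2$; but this is precisely the content of Proposition~\ref{prop:A=uB} as already exploited in the proof of Theorem~\ref{thm:cM(B)manifold} (there $\cF_{A^{-1}} = \cF_{B^{-1}}$ iff $A = Bu^{-1}$ for some $u \in \Spe(1)$, i.e.\ iff $A\Spe(1)I_2 = B\Spe(1)I_2$). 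Thus the fibers of $A \mapsto \cF_{A^{-1}}$ are the right cosets of $\Spe(1)I_2$, which are exactly the orbits of the right $\Spe(1)$-action, closing the argument.
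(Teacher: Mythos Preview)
Your proposal is correct and follows exactly the paper's approach: the paper simply remarks that the proof of Theorem~\ref{thm:cM(B)manifold}, which invokes Proposition~\ref{prop:buildprincipalbundle} for the right $\Spe(1)I_2$-action on $\Spe(1,1)$, already yields the principal bundle structure, and then the diffeomorphism of Theorem~\ref{thm:cM(B)manifold} transports it to $\cM(\B)$. Your write-up spells out in detail what the paper leaves implicit, but the argument is the same.
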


\begin{remark}\label{rmk:cM(B)manifold}
	It is useful to compare Proposition~\ref{prop:AtoFAhomomorphism} and Theorem~\ref{thm:cM(B)manifold}. The former yields a Lie group structure on $\Mbb(\B)$ and the latter a manifold structure on $\cM(\B)$, both cases realize them as quotients of the Lie group $\Spe(1,1)$ by a closed subgroup. However, for $\Mbb(\B)$ the subgroup is $\Z_2 I_2$, which is central and thus normal, and for $\cM(\B)$ the subgroup is $\Spe(1)I_2$ which is not normal. For example, we have
	\[
		\begin{pmatrix*}
			a & 0 \\
			0 & 1 
		\end{pmatrix*}
		\begin{pmatrix*}
			u & 0 \\
			0 & u 
		\end{pmatrix*}
		\begin{pmatrix*}
			a & 0 \\
			0 & 1
		\end{pmatrix*}^{-1} =
		\begin{pmatrix*}[c]
			aua^{-1} & 0 \\
			0 & u
		\end{pmatrix*}
	\]
	where $a, u \in \Spe(1)$, and this matrix does not belong to $\Spe(1) I_2$ unless $au = ua$. Hence, that $\Spe(1) I_2$ is not normal follows from the non-commutativity of $\HH$. This implies the lack of a group structure on $\cM(\B)$ in the constructions above.
	
	On the other hand, Corollary~\ref{cor:Sp(1,1)cM(B)PrincipalFiberBundle} proves the canonical nature of the manifold structure on $\cM(\B)$ introduced through Theorem~\ref{thm:cM(B)manifold}. This is the case because the assignment $A \mapsto \cF_A$ canonically yields the set $\cM(\B)$. Note that using either $\cF_A$ or $\cF_{A^{-1}}$ yields the same set (since $\Spe(1,1)$ is a group) and the same manifold structure (since $A \mapsto A^{-1}$ is diffeomorphism of $\Spe(1,1)$). As found below, it will be more convenient to use the expressions considered in Theorem~\ref{thm:cM(B)manifold} and Corollary~\ref{cor:Sp(1,1)cM(B)PrincipalFiberBundle}.
\end{remark}

The previous results and observations lead us to the next result.

\begin{corollary}\label{cor:cM(B)notgroup}
	The set $\cM(\B)$ of regular M\"obius transformations on $\B$ does not admit a group structure such that the assignment $A \mapsto \cF_A$ from $\Spe(1,1)$ onto $\cM(\B)$ is a either a homomorphism or an anti-homomorphism.
\end{corollary}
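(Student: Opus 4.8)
The plan is to argue by contradiction, exploiting Proposition~\ref{prop:A=uB} to transfer any putative group structure on $\cM(\B)$ back to the coset space $\Spe(1)I_2 \backslash \Spe(1,1)$ (or $\Spe(1,1)/\Spe(1)I_2$), and then to derive an algebraic incompatibility from the fact that $\Spe(1)I_2$ is not a normal subgroup of $\Spe(1,1)$. First I would suppose that $\cM(\B)$ carries a group structure for which $\Phi\colon \Spe(1,1)\to\cM(\B)$, $A\mapsto\cF_A$, is a homomorphism. By Proposition~\ref{prop:A=uB}, the fibers of $\Phi$ are exactly the left cosets $\Spe(1)I_2\,A$ (equivalently, $\cF_A=\cF_B$ iff $A\in\Spe(1)I_2\,B$), and $\Phi$ is surjective by definition of $\cM(\B)$. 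A surjective group homomorphism has normal kernel; hence its fibers are the cosets of a normal subgroup, and in particular the fiber through the identity $I_2$, namely $\Spe(1)I_2$, would have to be a normal subgroup of $\Spe(1,1)$. This contradicts the computation in Remark~\ref{rmk:cM(B)manifold}, where conjugating $\operatorname{diag}(u,u)\in\Spe(1)I_2$ by $\operatorname{diag}(a,1)\in\Spe(1,1)$ produces $\operatorname{diag}(aua^{-1},u)$, which lies in $\Spe(1)I_2$ only when $au=ua$; since $\HH$ is noncommutative, such $a,u\in\Spe(1)$ with $au\neq ua$ exist, so $\Spe(1)I_2$ is not normal.

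For the anti-homomorphism case I would invoke the observation already made in Remark~\ref{rmk:AtoFAhomomorphism}: if $\Phi$ were an anti-homomorphism onto a group, then $A\mapsto\Phi(A^{-1})$ would be a homomorphism onto that same group with the same fibers (since $A\mapsto A^{-1}$ permutes the cosets of any subgroup, carrying $\Spe(1)I_2\,A$ to $A^{-1}\Spe(1)I_2$, a right coset, but in any case the set of fibers of $A\mapsto\Phi(A^{-1})$ is still a partition of $\Spe(1,1)$ into cosets of $\Spe(1)I_2$ on one side). A cleaner route is: an anti-homomorphism $\Phi$ onto a group $H$ still has a well-defined kernel $\Phi^{-1}(e_H)$, which is again a normal subgroup (the kernel condition $\Phi(ab)=\Phi(b)\Phi(a)$ forces $\ker\Phi$ to be normal exactly as for homomorphisms), and this kernel is $\Spe(1)I_2$ as above, yielding the same contradiction. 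So in either case the existence of such a group structure forces $\Spe(1)I_2\trianglelefteq\Spe(1,1)$, which is false.

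The only genuinely delicate point is to make sure the argument does not secretly assume the group structure on $\cM(\B)$ is the ``obvious'' one or is compatible with any topology: the statement allows an \emph{arbitrary} abstract group structure on the set $\cM(\B)$, so I must phrase everything purely set-theoretically and algebraically. This is fine, because the contradiction is extracted entirely from (i) surjectivity of $\Phi$, (ii) the identification of its fibers with cosets of $\Spe(1)I_2$ coming from Proposition~\ref{prop:A=uB}, and (iii) the elementary fact that the kernel of a group (anti-)homomorphism is normal — none of which refers to any manifold or topology on $\cM(\B)$. I expect the main obstacle to be purely expository: stating precisely, from Proposition~\ref{prop:A=uB}, that $\Phi^{-1}(\Phi(I_2))=\Spe(1)I_2$ regardless of which element of $\cM(\B)$ happens to play the role of the identity of the abstract group (here one uses that the identity of a group is characterized by $e\cdot e = e$, so $\Phi^{-1}(e)$ is closed under the induced operation and contains $I_2$ only after checking $\cF_{I_2}$ is that identity — but in fact one does not even need to know \emph{which} $\cF_A$ is the identity; one only needs that the kernel is \emph{some} normal subgroup appearing as a fiber, and every fiber is a coset of $\Spe(1)I_2$, hence a translate of a non-normal subgroup, which cannot be normal unless $\Spe(1)I_2$ itself is). I would close by remarking that this gives the promised Lie-theoretic explanation: the failure is a direct consequence of the non-commutativity of $\HH$ via the non-normality of $\Spe(1)I_2$.
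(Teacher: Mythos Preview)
Your proposal is correct and follows essentially the same route as the paper's proof: both argue by contradiction, use Proposition~\ref{prop:A=uB} to identify the fibers of $A\mapsto\cF_A$ as cosets of $\Spe(1)I_2$, observe that the kernel of the putative (anti-)homomorphism must therefore be $\Spe(1)I_2$ itself (the only coset that is a subgroup), and then invoke the non-normality of $\Spe(1)I_2$ from Remark~\ref{rmk:cM(B)manifold}. Your explicit attention to the fact that one does not know \emph{a priori} which element of $\cM(\B)$ serves as the abstract identity is well placed and is handled in the paper by the same observation you make, namely that a coset $A_0\,\Spe(1)I_2$ is a subgroup only when $A_0\in\Spe(1)I_2$.
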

\begin{proof}
	Suppose that such group structure on $\cM(\B)$ does exist. Remark~\ref{rmk:AtoFAhomomorphism} allows us to assume that the assignment $\Spe(1,1) \rightarrow \cM(\B)$ given by $A \mapsto \cF_A$ is a homomorphism. By Theorem~\ref{thm:cM(B)manifold}, or even directly from Proposition~\ref{prop:A=uB}, every fiber of this map is of the form $A_0\Spe(1)$, for some $A_0 \in \Spe(1,1)$, which is a subgroup if and only if $A_0 = u_0 I_2$, for some $u_0 \in \Spe(1)$. This implies that the kernel of the said homomorphism is necessarily $\Spe(1) I_2$, which is not normal as noted in Remark~\ref{rmk:cM(B)manifold}. This contradiction completes the proof.
\end{proof}

\begin{remark}\label{rmk:cM(B)notgroup}
	It is important to recall that \cite[Section~2.5]{ColomboSabadiniStruppaEntire} has introduced a notion of regular composition $f \bullet g$ of two slice regular functions $f,g$. One is bounded to ask whether this would yield a group structure on $\cM(\B)$. However, as noted in \cite[Proposition~2.8]{ColomboSabadiniStruppaEntire}, this operation is in general not associative. Nevertheless, the composition $\bullet$ could provide some sort of algebraic structure on $\cM(\B)$ since it is naturally built from the $*$-product. Further research in this direction could provide interesting results.
	
	On the other hand, regular M\"obius transformations yield the building blocks of the so-called (slice regular) Blaschke products. We refer to \cite[Section~6.3]{AlpayColomboSabadiniSliceSchur} and \cite[Section~4.4]{ColomboSabadiniStruppaEntire} for definitions and further details. This yields a product of elements in $\cM(\B)$ whose study is fundamental to hyperholomorphic analysis. Even though Blaschke products do not yield a group structure on $\cM(\B)$, we believe they might be relevant to the study of the group-like structures studied in this work. 
	
	Finally, we observe that \cite{Vlacci} has introduced another notion of regular composition for slice regular functions. This operation does not seem to entitle a group structure on $\cM(\B)$. However, it is also of interest to determine the precise relationship of this operation with group-like structures.
\end{remark}

The next step is to identify the manifold structure of $\cM(\B)$ given by Theorem~\ref{thm:cM(B)manifold}. In other words, realize $\cM(\B)$ as a concrete manifold. To achieve this, we recall a result that describes all regular M\"obius transformations on $\B$ as a set. This result is a restatement of \cite[Corollary~9.17]{GentiliStoppatoStruppa2ndEd} and \cite[Corollary~1.8]{StoppatoMobius}.

\begin{proposition}[\cite{GentiliStoppatoStruppa2ndEd,StoppatoMobius}]
	\label{prop:cM(B)isBxSp(1)assets}
	Let us denote by $\varphi : \B \times \Spe(1) \rightarrow \cM(\B)$ the map that assigns to every $(a,u) \in \B \times \Spe(1)$ the regular M\"obius transformation represented~by 
	\[
		(1 - q\overline{a})^{-*} *(q - a)u.
	\]
	Then, $\varphi$ is a bijection of sets.
\end{proposition}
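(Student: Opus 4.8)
The plan is to relate the map $\varphi$ to the bijection already established in Theorem~\ref{thm:cM(B)manifold}, namely $A\Spe(1)I_2 \mapsto \cF_{A^{-1}}$, by exhibiting, for each pair $(a,u) \in \B \times \Spe(1)$, an explicit matrix $A(a,u) \in \Spe(1,1)$ such that $\cF_{A(a,u)^{-1}}$ is the regular M\"obius transformation $(1-q\overline{a})^{-*}*(q-a)u$. The natural candidate is built from the classical data: the matrix whose associated classical M\"obius transformation is $q \mapsto (1-q\overline{a})^{-1}(q-a)u$ is, up to a normalizing positive scalar, $B(a) = (1-|a|^2)^{-1/2}\begin{pmatrix} 1 & -\overline{a} \\ -a & 1\end{pmatrix}$ composed on the appropriate side with the diagonal unit $\operatorname{diag}(u,1)$ or $\operatorname{diag}(1,u)$; one checks directly that such a matrix lies in $\Spe(1,1)$ (this is the standard computation behind the transitivity of the $\Spe(1,1)$-action). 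Since $l_{(a,b)}(q) = qa+b$ is itself slice regular and the $*$-product of the relevant linear factors recovers exactly the formula $\cF_A(q) = (qc+d)^{-*}*(qa+b)$ of Definition~\ref{def:regularMobius}, the value $\varphi(a,u)$ is genuinely of the form $\cF_{A^{-1}}$ for a suitable $A \in \Spe(1,1)$, so $\varphi$ maps into $\cM(\B)$ as claimed and factors through the quotient map $\Spe(1,1) \to \Spe(1,1)/\Spe(1)I_2 \cong \cM(\B)$.

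For surjectivity of $\varphi$, I would invoke Proposition~\ref{prop:cM(B)isBxSp(1)assets}'s ambient source, i.e.~the structure result of \cite{GentiliStoppatoStruppa2ndEd,StoppatoMobius}: every element of $\cM(\B)$ is a regular bijection of $\B$ onto itself (Proposition~\ref{prop:regularMobius}), and the cited corollaries express every such bijection in the normal form $(1-q\overline{a})^{-*}*(q-a)u$ with $a = \cF^{-1}(0)$ (or $\cF(0)$, depending on the convention, recovered from the value at the origin) and $u$ determined by the "regular derivative" or leading coefficient at $0$ after translating. Thus every element of $\cM(\B)$ lies in the image of $\varphi$. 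For injectivity, suppose $\varphi(a,u) = \varphi(a',u')$; evaluating the normal form and its behavior at $q=0$ pins down $a$ from $a'$ — the point $a$ is characterized intrinsically (e.g.~as the image of $0$, since $(q-a)u$ vanishes at $q=a$ and the $*$-reciprocal factor is nonvanishing there, so $\cF$ sends $a \mapsto 0$ and hence $a$ is the unique zero) — and then, comparing the remaining data (the first-order term after dividing out $(1-q\overline a)^{-*}$), one recovers $u = u'$. This is exactly the content extracted from \cite[Corollary~9.17]{GentiliStoppatoStruppa2ndEd} and \cite[Corollary~1.8]{StoppatoMobius}, so it suffices to cite these.

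Since this Proposition is a restatement of results already in the literature, the honest proof is essentially a pointer: I would write that the claim is \cite[Corollary~9.17]{GentiliStoppatoStruppa2ndEd} together with \cite[Corollary~1.8]{StoppatoMobius}, transported from the setting over $\HH$ (minus a zero set) to the setting over $\B$ by the same argument already used for Proposition~\ref{prop:regularMobius} and Proposition~\ref{prop:A=uB}, and using that $\HH I_2 \cap \Spe(1,1) = \Spe(1)I_2$ to see that the unimodular parameter may be taken in $\Spe(1)$ rather than in $\HH\setminus\{0\}$. The one place deserving a sentence of its own is the verification that $\varphi$ is well defined, i.e.~that the expression $(1-q\overline a)^{-*}*(q-a)u$ is defined on all of $\B$ and belongs to $\cM(\B)$: this follows because $l_{(-\overline a,1)}^s(q) = (1 - q\overline a)*(1 - qa) $ has symmetrization with zero set equal to $\{q : q^2 - q(a+\overline a) + |a|^2 = 0\}$, which for $a \in \B$ is disjoint from $\overline\B$, so the $*$-reciprocal is regular on a neighborhood of $\overline\B$.

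The main obstacle is not conceptual but bookkeeping: matching the normalization and the left/right placement of the unit $u$ so that $\varphi(a,u)$ is literally $\cF_{A^{-1}}$ for the matrix $A$ I write down, rather than $\cF_A$ or $\cF_{A^{-1}}$ with $u$ replaced by $\overline u$ or $u^{-1}$. Getting this consistent with the conventions already fixed in Definition~\ref{def:regularMobius} and Theorem~\ref{thm:cM(B)manifold} (the appearance of $A^{-1}$), and with the cited external statements, requires care but no new ideas; everything else is a citation.
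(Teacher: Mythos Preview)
Your proposal is correct and ultimately lands exactly where the paper does: the paper gives no proof for this proposition at all, merely introducing it as ``a restatement of \cite[Corollary~9.17]{GentiliStoppatoStruppa2ndEd} and \cite[Corollary~1.8]{StoppatoMobius},'' which is precisely the citation you arrive at in your third paragraph. Your opening detour---constructing an explicit matrix $A(a,u) \in \Spe(1,1)$ lifting $\varphi(a,u)$ and factoring through the bijection of Theorem~\ref{thm:cM(B)manifold}---is unnecessary for this statement but is not wasted: it is exactly the map $\widetilde{\varphi}$ that the paper introduces in the proof of the \emph{next} result, Theorem~\ref{thm:cM(B)isBxSp(1)asmanifolds}, to upgrade the set bijection to a diffeomorphism.
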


It turns out that $\cM(\B)$ can be precisely realized by $\B \times \Spe(1)$ as a manifold, not just as a~set, as we prove in the next result.

\begin{theorem}\label{thm:cM(B)isBxSp(1)asmanifolds}
	The map $\varphi : \B \times \Spe(1) \rightarrow \cM(\B)$, defined in Proposition~\ref{prop:cM(B)isBxSp(1)assets}, is a diffeomorphism for the manifold structure on $\cM(\B)$ given by Theorem~\ref{thm:cM(B)manifold}. In particular, $\cM(\B)$ is diffeomorphic to $\R^4 \times S^3$, where $S^3$ denotes the $3$-dimensional sphere.
\end{theorem}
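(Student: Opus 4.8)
The plan is to establish that $\varphi$ is a diffeomorphism by exploiting the principal fiber bundle structure from Corollary~\ref{cor:Sp(1,1)cM(B)PrincipalFiberBundle} together with the local sections guaranteed by Proposition~\ref{prop:localsections}. First I would observe that by Proposition~\ref{prop:cM(B)isBxSp(1)assets} the map $\varphi$ is already a bijection of sets, so the entire content of the theorem is that $\varphi$ and $\varphi^{-1}$ are smooth. Since $\cM(\B)$ carries the quotient manifold structure making $\pi : \Spe(1,1) \rightarrow \cM(\B)$, $A \mapsto \cF_{A^{-1}}$, a submersion (indeed a principal $\Spe(1)$-bundle), the universal property of submersions tells us that a map into $\cM(\B)$ is smooth if and only if it lifts locally to a smooth map into $\Spe(1,1)$, and a map out of $\cM(\B)$ is smooth if and only if its composition with $\pi$ is smooth. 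So the strategy reduces to producing, for each $(a,u) \in \B \times \Spe(1)$, an explicit smooth matrix-valued function $A(a,u) \in \Spe(1,1)$ on a neighborhood with $\cF_{A(a,u)^{-1}} = \varphi(a,u)$.

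The key computational step is to exhibit such a lift in closed form. The classical model here is that the regular M\"obius transformation $(1 - q\overline{a})^{-*}*(q-a)u$ should be $\cF_{A^{-1}}$ for a matrix $A$ built in the obvious way from $a$ and $u$; concretely one expects something like
\[
	A(a,u) = \frac{1}{\sqrt{1 - |a|^2}}
	\begin{pmatrix}
		u & -\overline{a} \\
		-a u & 1
	\end{pmatrix}
	\quad\text{or its inverse,}
\]
possibly up to reordering of entries to match the convention in \eqref{eq:FA} and Definition~\ref{def:regularMobius}; I would verify directly from Definition~\ref{def:regularMobius} that $\cF_{A(a,u)^{-1}}$ equals the expression in Proposition~\ref{prop:cM(B)isBxSp(1)assets}, using $|a|<1$ to see that the normalizing scalar $1/\sqrt{1-|a|^2}$ is well defined and that $A(a,u) \in \Spe(1,1)$. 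Because the entries of $A(a,u)$ are rational (indeed polynomial after the square-root factor, which is smooth since $1-|a|^2 > 0$) in the real coordinates of $(a,u) \in \B \times \Spe(1)$, the map $(a,u) \mapsto A(a,u)$ is smooth on all of $\B \times \Spe(1)$. Composing with the bundle projection $\pi$ gives $\varphi = \pi \circ A(\cdot,\cdot)$, so $\varphi$ is smooth.

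For smoothness of $\varphi^{-1}$ I would argue locally: given $\cF_{B^{-1}} \in \cM(\B)$, pick a local section $\sigma : V \rightarrow \Spe(1,1)$ of $\pi$ near that point (Proposition~\ref{prop:localsections}), so on $V$ we have a smooth choice of representative matrix; then reading off $a$ and $u$ from the entries of $\sigma(\cdot)$ — which amounts to the inverse of the explicit algebraic formula above, hence is smooth wherever defined, the relevant denominators being nonzero by the $\Spe(1,1)$ relations — shows $\varphi^{-1}$ is smooth on $V$. Alternatively, since $\varphi$ is a smooth bijection between manifolds of the same dimension ($\dim(\B \times \Spe(1)) = 4 + 3 = 7 = \dim\cM(\B)$ by Theorem~\ref{thm:cM(B)manifold}), it suffices to check that the differential of $\varphi$ is everywhere injective, which follows from the fact that $A(\cdot,\cdot)$ is a smooth section-like map transverse to the $\Spe(1)$-orbits — equivalently, $(a,u) \mapsto A(a,u)\Spe(1)I_2$ is injective with injective differential, inherited from the injectivity of $\varphi$ and the submersion property of $\pi$. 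Either route then yields that $\varphi$ is a diffeomorphism. The final assertion $\cM(\B) \cong \R^4 \times S^3$ is immediate: $\B$ is diffeomorphic to $\R^4$ (radial rescaling) and $\Spe(1)$ is the unit sphere $S^3 \subset \HH \cong \R^4$.

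The main obstacle I anticipate is pinning down the exact matrix $A(a,u)$ — getting the placement of $a$, $\overline{a}$, $u$ and the normalization exactly right so that $\cF_{A(a,u)^{-1}}$ reproduces $(1-q\overline{a})^{-*}*(q-a)u$ on the nose, rather than up to a unit factor or a sign; this requires carefully unwinding the $*$-product and the $-*$ conventions in Definition~\ref{def:regularMobius} and reconciling them with the $(qc+d)$ ordering in \eqref{eq:FA}. Once that algebraic identity is nailed down, the differential-geometric wrapper (smoothness via the submersion/section machinery, equality of dimensions, diffeomorphism) is routine.
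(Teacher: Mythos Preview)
Your forward direction is exactly the paper's approach: lift $\varphi$ through the submersion $\pi$ by writing down an explicit smooth map $\widetilde{\varphi}:\B\times\Spe(1)\to\Spe(1,1)$ with $\pi\circ\widetilde{\varphi}=\varphi$. The paper uses
\[
\widetilde{\varphi}(a,u)=(1-|a|^2)^{-1/2}
\begin{pmatrix}\overline{u}&\overline{au}\\ a&1\end{pmatrix},
\]
which is one of the candidates you anticipate.

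For smoothness of $\varphi^{-1}$, however, your route (b) contains a genuine gap. You assert that $\widetilde{\varphi}$ is ``transverse to the $\Spe(1)$-orbits'' and that injectivity of $\dif\varphi$ is ``inherited from the injectivity of $\varphi$ and the submersion property of $\pi$.'' This is circular: set-theoretic injectivity of $\varphi$ says nothing about injectivity of $\dif\varphi$, and the submersion property of $\pi$ only tells you $\dif\pi$ is surjective, not that the image of $\dif\widetilde{\varphi}$ avoids $\ker\dif\pi$. The paper takes precisely this route but does the real work: it computes $\dif\widetilde{\varphi}_{(a_0,u_0)}$ explicitly, checks it is injective, identifies $\ker\dif\pi_A$ with the tangent space $T_A(A\,\Spe(1))=\{Aw:w\in\HH,\ w+\overline{w}=0\}$, and then verifies transversality by comparing the $(2,2)$ entries (a real number on one side, a purely imaginary quaternion on the other). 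Without that computation, route (b) is only a restatement of what must be shown.

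Your route (a) is a legitimate alternative and in fact simpler than the paper's differential computation, but it needs one correction. A local section $\sigma$ of $\pi$ lands in $\Spe(1,1)$, not in the image of $\widetilde{\varphi}$, so ``the inverse of the explicit algebraic formula'' is not defined on $\sigma(V)$. What you need instead are formulas for $(a,u)$ that are invariant under the right $\Spe(1)$-action $B\mapsto Bv$. Writing $B=\begin{pmatrix}\alpha&\gamma\\\beta&\delta\end{pmatrix}\in\Spe(1,1)$, the $\Spe(1,1)$ relations force $\delta\neq 0$, $|\alpha|=|\delta|$, and $|\beta|<|\delta|$, and the expressions $a=\beta\delta^{-1}$, $u=\delta\alpha^{-1}$ are smooth, right-$\Spe(1)$-invariant, and recover $(a,u)$ on $\widetilde{\varphi}(a,u)$. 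This gives a smooth map $\Spe(1,1)\to\B\times\Spe(1)$ that descends through $\pi$ to $\varphi^{-1}$, so local sections are not even needed. With this fix, route (a) works and bypasses the differential computation entirely.
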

\begin{proof}
	Let us consider the diagram
	\[
	\xymatrixcolsep{4pc}
	\xymatrix{
		& \Spe(1,1) 
		\ar[d]^-{\pi}	\\
		\B \times \Spe(1) \ar[r]^-{\varphi} 
			\ar[ur]^-{\widetilde{\varphi}}
		& \cM(\B)
	}
	\]
	where $\pi(A) = \cF_{A^{-1}}$, for every $A \in \Spe(1,1)$, and where we also take
	\begin{align*}
		\widetilde{\varphi}(a,u) &=
			(1 - |a|^2)^{-\frac{1}{2}}
				\begin{pmatrix}
					\overline{u} & \overline{au}  \\
					a & 1
				\end{pmatrix}  \\
			&=
			\Bigg(
				(1 - |a|^2)^{-\frac{1}{2}}
				\begin{pmatrix*}[r]
					1 & -\overline{a}  \\
					-a & 1
				\end{pmatrix*}
				\begin{pmatrix}
					u & 0 \\
					0 & 1
				\end{pmatrix}
			\Bigg)^{-1}
	\end{align*}
	for every $(a,u) \in \B \times \Spe(1)$. 
	
	Recall that $\Spe(1,1)$, as a manifold, is a submanifold of the space of $2 \times 2$ matrices with quaternionic entries. Note that the latter is a $16$-dimensional real vector space.
	
	The map $\widetilde{\varphi}$ is well defined in the sense that $\widetilde{\varphi}(a,u)$ belongs to $\Spe(1,1)$, for every $(a,u) \in \B \times \Spe(1)$. This can be verified by a direct computation. Once this has been observed, we note that $\widetilde{\varphi}$ is a smooth map; the reason is that $\Spe(1,1)$ is a closed embedded submanifold of the space of matrices referred above, so that \cite[Theorem~1.32]{WarnerBook} can be applied to ensure the smoothness of $\widetilde{\varphi}$ when considered as mapping into $\Spe(1,1)$.
	
	The map $\pi$ is the projection of the principal fiber bundle considered in Corollary~\ref{cor:Sp(1,1)cM(B)PrincipalFiberBundle}, which implies that $\pi$ is a submersion: i.e.~a smooth surjective map with surjective differential at every point.
	
	A straightforward computation shows that the diagram above is commutative. In other words, we have $\varphi = \pi \circ \widetilde{\varphi}$, and so $\varphi$ is smooth. By Proposition~\ref{prop:cM(B)isBxSp(1)assets}, the map $\varphi^{-1}$ exists and it remains to prove that it is smooth as well. Since both $\B \times \Spe(1)$ and $\cM(\B)$ have dimension $7$, by the Theorem of the Inverse Function for manifolds, it is enough to show that $\dif \varphi_{(a_0,u_0)}$ is an isomorphism at every point $(a_0,u_0) \in \B \times \Spe(1)$. 
	
	Let us fix $(a_0,u_0) \in \B \times \Spe(1)$ and let us start by computing $\dif \widetilde{\varphi}_{(a_0,u_0)}$. We observe that
	\[
		\widetilde{\varphi}(a,u) 
			= \widetilde{\varphi}(0,u) \widetilde{\varphi}(a,1)
	\]
	for every $(a,u) \in \B \times \Spe(1)$. Since the product of matrices is bilinear, we can apply a Leibniz type rule to obtain, for every $(b,v) \in T_{(a_0,u_0)} \B \times \Spe(1)$
	\begin{align} \label{eq:diftildevarphi1}
		\dif \widetilde{\varphi}_{(a_0,u_0)}(b,v)
			&= \dif \widetilde{\varphi}_{(a_0,u_0)}(b,0)
				+ \dif \widetilde{\varphi}_{(a_0,u_0)}(0,v)  \\
			&= \widetilde{\varphi}(0,u_0) 
						\dif \widetilde{\varphi}_{(a_0,1)}(b,0)  \notag
				+ \dif \widetilde{\varphi}_{(0,u_0)}(0,v) 
						\widetilde{\varphi}(a_0,1).
	\end{align}
	We now proceed to compute the differentials in the second line of \eqref{eq:diftildevarphi1}. We clearly have
	\[
		\dif \widetilde{\varphi}_{(0,u_0)}(0,v) =
			\begin{pmatrix}
				\overline{v} & 0 \\
				0 & 0 
			\end{pmatrix},
	\]
	which yields the second differential. We will denote by $x \cdot y \in \R$ the usual inner product of $x, y \in \HH$, where we identify $\HH \simeq \R^4$. Then, a straightforward computation yields the first differential
	\begin{multline*}
		\dif \widetilde{\varphi}_{(a_0,1)}(b,0) = \\
			= (1 - |a_0|^2)^{-\frac{3}{2}}
				\begin{pmatrix}
					a_0\cdot b &
						(a_0\cdot b) \overline{a_0} 
							+ (1 - |a_0|^2)\overline{b} \\
					(a_0\cdot b) a_0 + (1 - |a_0|^2)b &
						a_0 \cdot b
				\end{pmatrix}.
	\end{multline*}
	Replacing the last formulas in \eqref{eq:diftildevarphi1} yields, after some manipulations, the formula
	\begin{multline}\label{eq:diftildevarphi2}
		\dif \widetilde{\varphi}_{(a_0,u_0)}(b,v) =
			 (1 - |a_0|^2)^{-\frac{3}{2}} \times \\
			 	\times
				\begin{pmatrix}
					(a_0 \cdot b)\overline{u} + (1 - |a_0|^2) \overline{v} &
						(a_0\cdot b) \overline{a_0 u_0} 
								+ (1 - |a_0|^2)(\overline{b u_0 + a_0 v}) \\
					(a_0\cdot b) a_0 + (1 - |a_0|^2)b &
							a_0 \cdot b
				\end{pmatrix},
	\end{multline}
	for every $(b,v) \in T_{(a_0,u_0)} \B \times \Spe(1)$. Let us assume that this expression vanishes. Hence, we have $a_0 \cdot b = 0$. Thus, the vanishing of the first column clearly implies $(b,v) = (0,0)$. This proves that $\dif \widetilde{\varphi}_{(a_0,u_0)}$ is injective for every $(a_0,u_0) \in \B \times \Spe(1)$. 
	
	On the other hand, we note that for every $A \in \Spe(1,1)$, we have
	\[
		\ker(\dif \pi_A) = T_A A \Spe(1),
	\]
	the tangent space at $A$ of the $\Spe(1)$-orbit. This is a consequence of Corollary~\ref{cor:Sp(1,1)cM(B)PrincipalFiberBundle} and the properties of principal fiber bundles. By using the exponential map of $\Spe(1)$, we can compute
	\begin{align*}
		T_A A\Spe(1) 
			&= \{ Aw : w \in \spe(1) \}  \\
			&= \{ Aw : w \in \HH,\; w + \overline{w} = 0 \},
	\end{align*}
	where the last identity follows from the description of $\spe(1)$, the Lie algebra of $\Spe(1)$, as the purely imaginary quaternions. We conclude that the tangent space $T_{\widetilde{\varphi}(a,u)} \widetilde{\varphi}(a,u) \Spe(1)$ consists of matrices of the form
	\[
		(1 - |a|^2)^{-\frac{1}{2}}
			\begin{pmatrix}
				\overline{u}w & \overline{au} w  \\
				aw & w
			\end{pmatrix}
	\]
	where $w \in \HH$ is purely imaginary. By comparing the entry at position $(2,2)$ in the last expression with the corresponding one of \eqref{eq:diftildevarphi2} we conclude that
	\begin{equation}\label{eq:cap=0}
		\dif\widetilde{\varphi}_{(a_0,u_0)}
			\big(T_{(a_0,u_0)} \B \times \Spe(1)\big) \cap 
				\ker\big(\dif \pi_{\widetilde{\varphi}(a_0,u_0)}\big) = 0,
	\end{equation}
	for every $(a_0,u_0) \in \B \times \Spe(1)$, because the inner product $a_0\cdot b$ is real and $w$ is purely imaginary in those expressions. In other words, at every point, the image of the differential of $\widetilde{\varphi}$ has intersection $0$ with the kernel of the differential of $\pi$.
	
	Applying the chain rule to the commutative diagram considered above, we obtain
	\[
		\dif \varphi_{(a_0,u_0)} = 
			\dif \pi_{\widetilde{\varphi}(a_0,u_0)} \circ
				\dif \widetilde{\varphi}_{(a_0,u_0)},
	\]
	for every $(a_0,u_0) \in \B \times \Spe(1)$. Thus, the injectivity of $\dif \widetilde{\varphi}_{(a_0,u_0)}$ established above and \eqref{eq:cap=0} imply that $\dif \varphi_{(a_0,u_0)}$ is injective, and so it is an isomorphism because $\B \times \Spe(1)$ and $\cM(\B)$ have the same dimension. This holds at every point $(a_0,u_0)$ of $\B \times \Spe(1)$ and so, as noted above, we conclude that $\varphi$ is a diffeomorphism.
	
	The last claim is now obvious, because $\B$ is diffeomorphic to $\R^4$ and $\Spe(1)$ is precisely the $3$-dimensional sphere contained in $\HH \simeq \R^4$.
\end{proof}

\begin{remark}\label{rmk:cM(B)isBxSp(1)asmanifolds}
	From the previous results, we have observed two natural set descriptions of the family $\cM(\B)$ of regular M\"obius transformations. These are given by the following bijections
	\begin{align*}
		\Spe(1,1)/\Spe(1) I_2 &\longleftrightarrow \cM(\B) &
		\B \times \Spe(1) &\longleftrightarrow \cM(\B) \\
			A\Spe(1) &\longleftrightarrow \cF_{A^{-1}} &
			(a,u) &\longleftrightarrow 
						(1 - q\overline{a})^{-*}*(q - a)u.
	\end{align*}
	Either of them can be used to define a manifold structure. Basically, this is achieved by declaring the bijection to be a diffeomorphism, which is precisely what we have done in Theorem~\ref{thm:cM(B)manifold}. The main point of Theorem~\ref{thm:cM(B)isBxSp(1)asmanifolds} is to prove that both bijections define the same manifold structure on $\cM(\B)$.
	
	On the other hand, both correspondences yield interesting properties of the manifold structure on $\cM(\B)$. The first one above provides a direct link with the group structure of $\Spe(1,1)$ as Theorem~\ref{thm:cM(B)manifold} has shown, and it also provides a principal fiber bundle to work with. The second correspondence above relates the manifold structure with the usual description of all regular M\"obius transformations stated in Proposition~\ref{prop:cM(B)isBxSp(1)assets}.
\end{remark}

\subsection{Slice regular evaluation-action of $\cM(\B)$ and $\Spe(1,1)$ on $\B$} We start by studying properties of the evaluation map of $\cM(\B)$ on $\B$. Note that such map is the closest object that can be thought of as some sort of ``action'' of $\cM(\B)$ on~$\B$. For this reason, we will refer to it as the \textbf{evaluation-action of $\cM(\B)$ on $\B$}. In the rest of this work, for every $u \in \Spe(1)$, we will denote by $R_u$ the slice regular M\"obius transformation given by $R_u(q) = qu$, for every $q \in \B$.

\begin{theorem}\label{thm:cM(B)action}
	For the manifold structure on $\cM(\B)$ given in Theorem~\ref{thm:cM(B)manifold}, the evaluation-action map
	\begin{align*}
		\cM(\B) \times \B &\rightarrow \B  \\
			(\cF,q) &\mapsto \cF(q),
	\end{align*}
	is smooth. Furthermore, the following properties hold.
	\begin{enumerate}
		\item Transitivity: For every $p,q \in \B$ there exists $\cF \in \cM(\B)$ such that $\cF(p) = q$.
		\item Isotropy at $0$: The family of $\cM(\B)_0 = \{ \cF \in \cM(\B) : \cF(0) = 0 \}$ is given by
		\[
			\cM(\B)_0 = \{ R_u : u \in \Spe(1) \}.
		\]
		\item Structure of the isotropy at $0$: $\cM(\B)_0$ is a compact submanifold of $\cM(\B)$ and a Lie group for this manifold structure together with the composition of maps. Furthermore, the assignment
		\begin{align*}
			\rho : \Spe(1) &\rightarrow \cM(\B)_0  \\
				u &\mapsto R_{u^{-1}},
		\end{align*}
		is an isomorphism of Lie groups.
	\end{enumerate}
\end{theorem}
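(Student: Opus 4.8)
The plan is to prove the smoothness of the evaluation-action first, and then to deduce the three numbered properties from the concrete parametrization $\varphi\colon\B\times\Spe(1)\to\cM(\B)$ of Proposition~\ref{prop:cM(B)isBxSp(1)assets}, which by Theorem~\ref{thm:cM(B)isBxSp(1)asmanifolds} is a diffeomorphism for the manifold structure of Theorem~\ref{thm:cM(B)manifold}. For the smoothness the essential point is a closed formula for the map $(A,q)\mapsto\cF_A(q)$ on $\Spe(1,1)\times\B$. Unwinding Definition~\ref{def:regularMobius}, and using that $l_{(c,d)}^{s}$ has real coefficients (so that $*$-multiplication against $1/l_{(c,d)}^{s}$ is ordinary multiplication), a short bookkeeping with power-series coefficients gives, for $A=\begin{pmatrix} a & c \\ b & d\end{pmatrix}\in\Spe(1,1)$,
\[
	\cF_A(q)=\big(|c|^2q^2+2\re(d\overline c)\,q+|d|^2\big)^{-1}\big(\overline c\,a\,q^2+(\overline d\,a+\overline c\,b)\,q+\overline d\,b\big),
\]
where the inverse is the quaternionic one. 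Since $\cF_A$ is defined on all of $\B$ for every $A\in\Spe(1,1)$ by Proposition~\ref{prop:regularMobius}, the real quadratic in the first factor never vanishes on $\B$, so this formula exhibits $(A,q)\mapsto\cF_A(q)$, and therefore also $(A,q)\mapsto\cF_{A^{-1}}(q)$, as a smooth (indeed real-analytic) map on $\Spe(1,1)\times\B$. To descend this to $\cM(\B)\times\B$ I would combine Corollary~\ref{cor:Sp(1,1)cM(B)PrincipalFiberBundle} with Proposition~\ref{prop:localsections}: around any $\cF_0\in\cM(\B)$ there are an open neighborhood $U$ and a smooth section $\sigma\colon U\to\Spe(1,1)$ with $\cF_{\sigma(\cF)^{-1}}=\cF$ on $U$, so on $U\times\B$ the evaluation-action is the composite $(\cF,q)\mapsto(\sigma(\cF),q)\mapsto\cF_{\sigma(\cF)^{-1}}(q)$ of smooth maps; covering $\cM(\B)$ by such neighborhoods yields smoothness of the evaluation-action globally.

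For the numbered properties I would record two elementary computations with the $*$-product. First, $(f*g)(0)=f(0)g(0)$ gives $\varphi(a,u)(0)=-au$, and $\varphi(0,u)$ is represented by $1^{-*}*(qu)=qu$, so $\varphi(0,u)=R_u$. Second, evaluating the $*$-product defining $\varphi(a,1)$ at $q=a$ and using that $\R[a]$ is commutative gives $\varphi(a,1)(a)=0$. Item~(2) is then immediate: $\varphi(a,u)(0)=-au=0$ forces $a=0$, whence $\cM(\B)_0=\{\varphi(0,u):u\in\Spe(1)\}=\{R_u:u\in\Spe(1)\}$. For transitivity, given $p,q\in\B$ one handles $q=0$ by $\varphi(p,1)$; when $q\neq 0$, since $\varphi(a,u)(p)=\varphi(a,1)(p)\,u$, it suffices to find $a\in\B$ with $|\varphi(a,1)(p)|=|q|$ and then set $u=\varphi(a,1)(p)^{-1}q\in\Spe(1)$. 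The function $a\mapsto|\varphi(a,1)(p)|$ is continuous on the connected set $\B$, equals $0$ at $a=p$ and $|p|$ at $a=0$, and along the real segment $a\in(-1,1)$ — where $\varphi(a,1)$ is the classical M\"obius map $q\mapsto(1-qa)^{-1}(q-a)$, whose value at $p$ has modulus $|p-a|/|1-pa|\to 1$ as $a\to 1^-$ — its values approach $1$; hence its range is the whole interval $[0,1)$, which contains $|q|$.

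For item~(3), the diffeomorphism $\varphi$ carries the compact embedded submanifold $\{0\}\times\Spe(1)\subset\B\times\Spe(1)$ onto $\cM(\B)_0$, so $\cM(\B)_0$ is a compact embedded submanifold of $\cM(\B)$ and $u\mapsto\varphi(0,u)=R_u$ is a diffeomorphism $\Spe(1)\to\cM(\B)_0$. The identities $R_u\circ R_v=R_{vu}$ and $R_u^{-1}=R_{u^{-1}}$ show that composition makes $\cM(\B)_0$ a group whose operations, transported to $\Spe(1)$ by this diffeomorphism, are $(u,v)\mapsto vu$ and $u\mapsto u^{-1}$, both smooth, so $\cM(\B)_0$ is a Lie group. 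Finally $\rho(u)=R_{u^{-1}}$ is the composite of the inversion of $\Spe(1)$ with the diffeomorphism $u\mapsto R_u$, hence itself a diffeomorphism, and $\rho(u)\circ\rho(v)=R_{v^{-1}u^{-1}}=R_{(uv)^{-1}}=\rho(uv)$ shows it is a group homomorphism, so it is an isomorphism of Lie groups. I expect the main work to lie in justifying the closed formula for $\cF_A$ and the zero-freeness of its denominator on $\B$ (the latter via Proposition~\ref{prop:regularMobius}), since this is exactly what makes the evaluation-action smooth; everything afterwards is routine bookkeeping with $\varphi$ together with elementary continuity and Lie-group arguments.
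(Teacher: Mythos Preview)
Your proof is correct, and the overall strategy---obtain a closed formula for the evaluation, then read off items (1)--(3) from the parametrization $\varphi$---matches the paper's in outline, but two parts are organized differently.

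For smoothness, the paper works on the $\B\times\Spe(1)$ side: it uses the diffeomorphism $\varphi$ of Theorem~\ref{thm:cM(B)isBxSp(1)asmanifolds} and the explicit expression
\[
	\varphi(a,u)(q)=(1-2\re(a)q+q^2|a|^2)^{-1}\bigl(q(a^2+1)-(q^2+1)a\bigr)u,
\]
so that the evaluation-action is the composite of the smooth map $(a,u,q)\mapsto\varphi(a,u)(q)$ with $(\varphi\times Id)^{-1}$. You instead work on the $\Spe(1,1)$ side: you derive the closed formula for $(A,q)\mapsto\cF_A(q)$ and descend via local sections of the bundle $\Spe(1,1)\to\cM(\B)$ from Corollary~\ref{cor:Sp(1,1)cM(B)PrincipalFiberBundle}. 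Both are valid; your route is the natural one if one wants to stay close to the \emph{definition} of the manifold structure in Theorem~\ref{thm:cM(B)manifold}, while the paper's route exploits the global trivialization already established in Theorem~\ref{thm:cM(B)isBxSp(1)asmanifolds} and avoids the local-section bookkeeping. For items (1) and (2) the paper simply cites \cite[Proposition~9.21 and Lemma~9.19]{GentiliStoppatoStruppa2ndEd}, whereas you give self-contained arguments: the computation $\varphi(a,u)(0)=-au$ for (2), and for (1) an intermediate-value argument on $a\mapsto|\varphi(a,1)(p)|$ (range $[0,1)$ by connectedness of $\B$, the vanishing at $a=p$, and the limit $1$ along real $a\to1^-$). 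Your treatment of (3) coincides with the paper's.
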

\begin{proof}
	The transitivity of regular M\"obius transformations has been proved before in \cite[Proposition~9.21]{GentiliStoppatoStruppa2ndEd}. Hence, claim (1) holds. Similarly, claim (2) follows from \cite[Lemma~9.19]{GentiliStoppatoStruppa2ndEd}.

	Let us now consider the commutative diagram 
	\[
		\xymatrixcolsep{8pc}
		\xymatrix{
			\B \times \Spe(1,1) \times \B 
			\ar[dd]_-{\varphi \times Id}
			\ar[rd]|-{(a,u,q)\mapsto \varphi(a,u)(q)} 
			& \\
			& \B \\
			\cM(\B) \times \B \ar[ru]|-{(\cF,q)\mapsto \cF(q)} &
		}
	\]
	where $\varphi$ is the diffeomorphism $\B \times \Spe(1) \rightarrow \cM(\B)$ introduced in Proposition~\ref{prop:cM(B)isBxSp(1)assets}. A straightforward computation shows that
	\begin{equation}\label{eq:varphi(a,u)(q)}
		\varphi(a,u)(q) =
			(1 - 2\re(a)q + q^2 |a|^2)^{-1}
				(q(a^2 + 1) - (q^2 + 1)a)u,
	\end{equation}
	which is clearly a smooth function of $a,q \in \B$ and $u \in \Spe(1)$. Note that similar computations can be found in \cite[Definition~6.3.3]{AlpayColomboSabadiniSliceSchur} and \cite[Definition~4.7]{ColomboSabadiniStruppaEntire}. Theorem~\ref{thm:cM(B)isBxSp(1)asmanifolds}, \eqref{eq:varphi(a,u)(q)} and the commutative diagram above show that the evaluation map of $\cM(\B)$ on $\B$ is indeed smooth.
	
	On the other hand, by (2) we have
	\[
		\varphi(\{0\} \times \Spe(1)) = \cM(\B)_0,
	\]
	and so $\cM(\B)_0$ is a compact submanifold of $\cM(\B)$. Furthermore, the following identities hold
	\[
		\varphi(1,uv) = R_{uv} = R_v \circ R_u, \quad
		\varphi(1,u^{-1}) = R_{u^{-1}} = \big(R_u\big)^{-1}.
	\]
	These identities together with (2) and the fact that $\Spe(1)$ is a Lie group show that the composition and inverse operations on $\cM(\B)_0$ are smooth. They also show that $\rho$, as defined in the statement, is an isomorphism of Lie groups. Hence, (3) holds and this completes the proof.
\end{proof}

We now prove that the Lie group $\cM(\B)_0$ has a natural smooth action on $\cM(\B)$.

\begin{proposition}\label{prop:cM(B)0actingoncM(B)}
	The map given by
	\begin{align*}
		\cM(\B)_0 \times \cM(\B) &\rightarrow \cM(\B)  \\
			(\cF_0, \cF) &\mapsto \cF_0 \circ \cF,
	\end{align*}
	defines a smooth action of the Lie group $\cM(\B)_0$ on the manifold $\cM(\B)$. Furthermore, this $\cM(\B)_0$-action is free and proper.
\end{proposition}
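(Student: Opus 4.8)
The plan is to transport the action through the diffeomorphism $\varphi : \B \times \Spe(1) \to \cM(\B)$ of Theorem~\ref{thm:cM(B)isBxSp(1)asmanifolds}, under which it becomes an elementary translation, so that well-definedness, smoothness, freeness and properness all become transparent. First I would check that the map is well defined, i.e.~that $\cF_0 \circ \cF \in \cM(\B)$ whenever $\cF_0 \in \cM(\B)_0$ and $\cF \in \cM(\B)$. By Theorem~\ref{thm:cM(B)action}~(2) we may write $\cF_0 = R_v$ with $v \in \Spe(1)$, and by Proposition~\ref{prop:cM(B)isBxSp(1)assets} we have $\cF = \varphi(a,u)$ for a unique $(a,u) \in \B \times \Spe(1)$. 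Since post-composing a slice regular function with $R_v$ amounts to taking the $*$-product with the constant function $v$ on the right, a short computation gives
\[
	R_v \circ \varphi(a,u)
		= \big((1 - q\overline{a})^{-*}*(q - a)u\big)*v
		= (1 - q\overline{a})^{-*}*(q - a)(uv)
		= \varphi(a,uv),
\]
which lies in $\cM(\B)$. Hence the composition stays inside $\cM(\B)$, and the resulting map is a left action of the group $(\cM(\B)_0, \circ)$ by the associativity of composition together with the fact that $R_1 = Id_\B$ is the identity of $\cM(\B)_0$.

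For smoothness, I would use that $\varphi$ restricts to a diffeomorphism $\Spe(1) \to \cM(\B)_0$, $v \mapsto \varphi(0,v) = R_v$, so that with respect to this and $\varphi$ itself the action map is identified with
\[
	\Spe(1) \times (\B \times \Spe(1)) \to \B \times \Spe(1), \qquad (v,(a,u)) \mapsto (a,uv),
\]
which is smooth because multiplication in the Lie group $\Spe(1)$ is. Freeness is equally visible in these coordinates, since $(a,uv) = (a,u)$ forces $v = 1$; alternatively, if $\cF_0 \circ \cF = \cF$ then composing on the right with the inverse of the bijection $\cF$ (Proposition~\ref{prop:regularMobius}) gives $\cF_0 = Id_\B = R_1$. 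Finally, $\cM(\B)_0$ is compact by Theorem~\ref{thm:cM(B)action}~(3), and every smooth action of a compact Lie group is proper (see subsection~\ref{subsec:principalbundles}), so the action is proper.

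I do not expect a genuine obstacle here: via $\varphi$ the whole statement reduces to the free, proper and smooth translation action of the compact group $\Spe(1)$ on itself. The only step needing a little care is the identity $R_v \circ \varphi(a,u) = \varphi(a,uv)$, and even this is immediate once one observes that composing on the left with $R_v$ is the same operation as $*$-multiplying by the constant quaternion $v$ on the right.
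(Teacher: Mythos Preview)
Your proof is correct and follows essentially the same route as the paper: both transport the action through the diffeomorphism $\varphi$ to the right translation $(v,(a,u)) \mapsto (a,uv)$ on $\B \times \Spe(1)$, from which smoothness, freeness, and properness (via compactness of $\cM(\B)_0$) are immediate. The only cosmetic difference is that for freeness the paper evaluates $R_u \circ \cF = \cF$ at a point with $\cF(q_0) \neq 0$, whereas you read it off directly from the coordinate description or from $\cF$ being a bijection; both are equally valid.
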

\begin{proof}
	Let $\cF_0 \in \cM(\B)_0$ and $\cF \in \cM(\B)$ be given. Then, Theorem~\ref{thm:cM(B)action} implies that there exists $u \in \Spe(1)$ such that $\cF_0 = R_u$, and so $\cF_0 \circ \cF = R_u \circ \cF$ which does belong to $\cM(\B)$. It follows that the map from the statement is well defined. Furthermore, it does yield an action by the associativity of the composition of maps.
	
	For the map $\varphi$ defined in Proposition~\ref{prop:cM(B)isBxSp(1)assets}, let us consider the diagram
	\[
		\xymatrixcolsep{8pc}
		\xymatrix{
			\Spe(1) \times \B \times \Spe(1)
			\ar[dd]_-{\substack{(v,a,u)\\\downmapsto\\(R_v,\varphi(a,u))}}
			\ar[r]^-{(v,a,u) \mapsto (a,uv)} 
			& \B \times \Spe(1) \ar[dd]^{\varphi} \\
				& \\
			\cM(\B)_0 \times \cM(\B) 
				\ar[r]_-{(\cF_0,\cF)\mapsto \cF_0 \circ \cF} 
				& \cM(\B)
	}
	\]
	which is easily seen to be commutative. The vertical arrows are diffeomorphisms by Theorems~\ref{thm:cM(B)isBxSp(1)asmanifolds} and \ref{thm:cM(B)action}(3), and the top horizontal arrow is smooth because $\Spe(1)$ is a Lie group. This implies the smoothness of the bottom arrow, thus showing that the $\cM(\B)_0$-action on $\cM(\B)$ is smooth.

	That the $\cM(\B)_0$-action is proper follows from the compactness of $\cM(\B)_0$. Let us now assume that we have
	\[
		R_u \circ \cF = \cF,
	\]
	for some $u \in \Spe(1)$ and $\cF \in \cM(\B)$. In other words, $R_u \in \cM(\B)_0$ fixes some element in $\cM(\B)$. If we choose $q_0 \in \B$ such that $q_1 = \cF(q_0) \not= 0$, then
	\[
		q_1 u = R_u \circ \cF(q_0) = \cF(q_0) = q_1,
	\]
	which implies that $u = 1$ and thus $R_u$ is the identity element of $\cM(\B)_0$. Hence, the $\cM(\B)_0$-action is free.
\end{proof}

Propositions~\ref{prop:cM(B)0actingoncM(B)} and \ref{prop:buildprincipalbundle} allow us to conclude the next result.

\begin{proposition}\label{prop:cM(B)0modcM(B)}
	The quotient $\cM(\B)_0 \backslash \cM(\B)$ admits a unique manifold structure such that the map $\cM(\B) \rightarrow \cM(\B)_0 \backslash \cM(\B)$ given by $\cF \mapsto \cM(\B)_0 \cF$ yields a principal fiber bundle. The structure group of this bundle is $\cM(\B)_0$ for the left action on $\cM(\B)$ defined in Proposition~\ref{prop:cM(B)0actingoncM(B)}.
\end{proposition}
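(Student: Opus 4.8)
The plan is to apply Proposition~\ref{prop:buildprincipalbundle} directly, in its left-action form. Recall that all of the hypotheses that proposition demands of a group action have already been secured: Theorem~\ref{thm:cM(B)action}(3) gives that $\cM(\B)_0$ is a Lie group, Theorem~\ref{thm:cM(B)manifold} gives that $\cM(\B)$ is a smooth manifold, and Proposition~\ref{prop:cM(B)0actingoncM(B)} establishes that the left action $(\cF_0,\cF) \mapsto \cF_0\circ\cF$ of $\cM(\B)_0$ on $\cM(\B)$ is smooth, free, and proper. So nothing remains to be checked; the content of the proof is purely an invocation.

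First I would note, as in the remark immediately following Proposition~\ref{prop:buildprincipalbundle}, that there is a version of that result for left actions in which one replaces $P/G$ by $G\backslash P$ throughout. Applying this version with $P = \cM(\B)$ and $G = \cM(\B)_0$ yields at once: the quotient $\cM(\B)_0\backslash\cM(\B)$, equipped with the quotient topology, is metrizable Hausdorff and carries a unique manifold structure for which the canonical projection $\cF \mapsto \cM(\B)_0\cF$ is a submersion; and, for this structure, the triple consisting of that projection, the total space $\cM(\B)$, and the base $\cM(\B)_0\backslash\cM(\B)$ is a principal fiber bundle with structure group $\cM(\B)_0$ acting by the left action of Proposition~\ref{prop:cM(B)0actingoncM(B)}. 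Since being a principal fiber bundle forces the projection to be a submersion (condition (1) of Definition~\ref{def:principalbundle}), the uniqueness clause of Proposition~\ref{prop:buildprincipalbundle} transfers verbatim to the uniqueness asserted here. One may also record, if desired, that $\dim\big(\cM(\B)_0\backslash\cM(\B)\big) = \dim\cM(\B) - \dim\cM(\B)_0 = 7 - 3 = 4$, again from Proposition~\ref{prop:buildprincipalbundle}.

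There is essentially no obstacle in this argument: the statement is a formal corollary of the general bundle-construction theorem once the preceding proposition has done the real work of verifying smoothness, freeness, and properness of the action. The only point that merits a word of care is the left/right bookkeeping, since Proposition~\ref{prop:buildprincipalbundle} is written for right actions and we are dealing with a left action; but this is handled by the left-action analogue explicitly flagged after that proposition, and is purely a matter of notation.
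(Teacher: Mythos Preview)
Your proof is correct and matches the paper's approach exactly: the paper simply states that the result follows from Propositions~\ref{prop:cM(B)0actingoncM(B)} and \ref{prop:buildprincipalbundle}, which is precisely the invocation you spell out. Your additional remarks on the left/right bookkeeping and the dimension count are accurate elaborations of what the paper leaves implicit.
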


\subsection{The unit ball $\B$ as a quotient: slice regular case}
We will use the previous results to express $\B$ as a quotient of $\cM(\B)$, in the sense of manifolds and principal fiber bundles. We start with the next result (see~\cite[Lemma~5.5]{QB-SliceKahler}) whose proof we present for the sake of completeness.

\begin{lemma}\label{lem:inverseorbitat0}
	Let $\cF_1, \cF_2 \in \cM(\B)$ be given. Then, the following conditions are equivalent.
	\begin{enumerate}
		\item $\cF_1^{-1}(0) = \cF_2^{-1}(0)$.
		\item There exists $\cF_0 \in \cM(\B)_0$ such that $\cF_1 = \cF_0 \circ \cF_2$.
	\end{enumerate}
\end{lemma}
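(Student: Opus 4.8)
The equivalence is a statement about when two slice regular Möbius transformations have the same "inverse orbit point" at $0$. The plan is to prove the two implications separately, translating everything into statements about matrices in $\Spe(1,1)$ via Proposition~\ref{prop:A=uB} and the description of $\cM(\B)_0$ from Theorem~\ref{thm:cM(B)action}(2).

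For the direction $(2) \Rightarrow (1)$: suppose $\cF_1 = \cF_0 \circ \cF_2$ with $\cF_0 \in \cM(\B)_0$. Then $\cF_0(0) = 0$, so $\cF_0^{-1}(0) = 0$ as well (each $\cF_0 = R_u$ is a bijection of $\B$ fixing $0$, by Theorem~\ref{thm:cM(B)action}(2) and Proposition~\ref{prop:regularMobius}). From $\cF_1 = \cF_0 \circ \cF_2$ we get, evaluating inverses as set-maps of $\B$, that $\cF_1^{-1} = \cF_2^{-1} \circ \cF_0^{-1}$, hence $\cF_1^{-1}(0) = \cF_2^{-1}(\cF_0^{-1}(0)) = \cF_2^{-1}(0)$. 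This is the easy direction and needs only that $\cF_0$ fixes $0$ and is invertible as a map $\B \to \B$.

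For the direction $(1) \Rightarrow (2)$: this is the substantive part. Write $a_j = \cF_j^{-1}(0) \in \B$, so the hypothesis is $a_1 = a_2 =: a$. Using Proposition~\ref{prop:cM(B)isBxSp(1)assets}, each $\cF_j = \varphi(b_j, u_j)$ is represented by $(1-q\overline{b_j})^{-*}*(q - b_j)u_j$; the zero of this transformation (the point mapped to $0$) is exactly $b_j$, since the $*$-product zeros of the numerator $(q-b_j)u_j$ coincide with the zero of $(q-b_j)$, namely $b_j$ — this is a standard fact about zeros of slice regular functions of the form $l_{(a,b)}$ under the $*$-product, and it gives $\cF_j^{-1}(0) = b_j$. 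Hence $b_1 = b_2 = a$, and $\cF_1, \cF_2$ differ only in the $\Spe(1)$-parameter: $\cF_1 = \varphi(a, u_1)$, $\cF_2 = \varphi(a, u_2)$. It then remains to check that $\varphi(a, u_1) = R_w \circ \varphi(a, u_2)$ for $w = u_1 u_2^{-1}$. This is a direct computation: $R_w \circ \varphi(a,u_2)$ is represented by $\big((1-q\overline a)^{-*}*(q-a)u_2\big)w = (1-q\overline a)^{-*}*(q-a)(u_2 w) = (1-q\overline a)^{-*}*(q-a)u_1$, using that right multiplication by a fixed quaternion commutes past the $*$-product on the left and is associative, so it equals $\varphi(a, u_1) = \cF_1$. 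Since $R_w \in \cM(\B)_0$ by Theorem~\ref{thm:cM(B)action}(2), setting $\cF_0 = R_w$ gives (2).

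The main obstacle is the identification $\cF_j^{-1}(0) = b_j$, i.e.\ that the unique point of $\B$ sent to $0$ by the transformation $(1-q\overline{b})^{-*}*(q-b)u$ is $b$ itself. This requires knowing that multiplying a slice regular function on the right by a nonzero constant $u$ does not alter its zero set, and that $(1-q\overline b)^{-*}$ is zero-free on $\B$ — both are part of the zero-structure theory recalled from \cite{GentiliStoppatoStruppa2ndEd}. Once this is in hand (it also follows directly from the normal-form bijection in Proposition~\ref{prop:cM(B)isBxSp(1)assets} together with the fact that $\varphi(b,1)$ is the classical Möbius map $q \mapsto (1-q\overline b)^{-1}(q-b)$ sending $b \mapsto 0$), everything else is bookkeeping with the $*$-product and the group law of $\Spe(1)$.
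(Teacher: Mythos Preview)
Your proof is correct and follows essentially the same approach as the paper: both use the parametrization $\varphi$ from Proposition~\ref{prop:cM(B)isBxSp(1)assets}, identify the zero of $\varphi(b,u)$ as $b$, and then observe that two transformations with the same first parameter differ by a left composition with some $R_w$. One small slip: from $R_w\circ\varphi(a,u_2)=\varphi(a,u_2w)$ you need $u_2w=u_1$, so $w=u_2^{-1}u_1$ rather than $w=u_1u_2^{-1}$ (the paper's value); since $\HH$ is noncommutative this matters for the explicit formula, though not for the existence of $\cF_0\in\cM(\B)_0$.
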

\begin{proof}
	By the definition of $\cM(\B)_0$, it is clear that (2) implies (1). Hence, let us assume that (1) holds. Proposition~\ref{prop:cM(B)isBxSp(1)assets} implies the existence of $a_1, a_2 \in \B$ and $u_1, u_2 \in \Spe(1)$ such that
	\[
		\cF_1 = \varphi(a_1,u_1), \quad
		\cF_2 = \varphi(a_2,u_2).
	\]
	By assumption, $\cF_1$ and $\cF_2$ vanish at the same point, which implies that $a_1 = a_2$. The latter is a consequence of \eqref{eq:varphi(a,u)(q)} and the fact that both $\cF_1$ and $\cF_2$ are bijective. Now using again \eqref{eq:varphi(a,u)(q)} we conclude that
	\[
		\cF_1 = R_u \circ \cF_2,
	\]
	where $u = u_2^{-1} u_1$. Then, Theorem~\ref{thm:cM(B)action}(2) yields (2) and thus completes the proof.
\end{proof}

We now show that $\B$ can be seen as a quotient of $\cM(\B)$.

\begin{theorem}\label{thm:BascM(B)quotient}
	For the evaluation-action of $\cM(\B)$ on $\B$, the \textbf{inverse orbit map} $\cM(\B) \rightarrow \B$ at $0$ given by $\cF \mapsto \cF^{-1}(0)$ is smooth and induces a well defined map
	\begin{align*}
		\cM(\B)_0 \backslash \cM(\B) &\rightarrow \B \\
			\cM(\B)_0 \cF &\mapsto \cF^{-1}(0),
	\end{align*}
	which is a diffeomorphism.
\end{theorem}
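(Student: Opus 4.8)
The plan is to deduce this from the same machinery already used for the classical case in Proposition~\ref{prop:BasM(B)quotient}, namely the modified versions of Theorem~3.2 and Proposition~4.3 in \cite[Chapter~II]{Helgason}, together with the smoothness and transitivity of the evaluation-action established in Theorem~\ref{thm:cM(B)action} and the principal bundle structure from Proposition~\ref{prop:cM(B)0modcM(B)}. First I would verify smoothness of the inverse orbit map $\cF \mapsto \cF^{-1}(0)$. Using the diffeomorphism $\varphi : \B \times \Spe(1) \to \cM(\B)$ of Theorem~\ref{thm:cM(B)isBxSp(1)asmanifolds}, it suffices to check that $(a,u) \mapsto \varphi(a,u)^{-1}(0)$ is smooth; but by Proposition~\ref{prop:cM(B)isBxSp(1)assets} and \eqref{eq:varphi(a,u)(q)}, the transformation $\varphi(a,u)$ is a bijection whose only zero is easily computed (it represents $(1 - q\overline{a})^{-*}*(q-a)u$, which vanishes exactly when $q = a$), so $\varphi(a,u)(a) = 0$ and hence $\varphi(a,u)^{-1}(0) = a$. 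Thus the inverse orbit map is nothing but the smooth projection $\cM(\B) \xrightarrow{\varphi^{-1}} \B \times \Spe(1) \to \B$ onto the first factor, which is manifestly smooth.

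Next I would check that the map descends to the quotient and is a bijection. By Lemma~\ref{lem:inverseorbitat0}, $\cF_1^{-1}(0) = \cF_2^{-1}(0)$ if and only if $\cF_1 = \cF_0 \circ \cF_2$ for some $\cF_0 \in \cM(\B)_0$, i.e.~if and only if $\cM(\B)_0 \cF_1 = \cM(\B)_0 \cF_2$; this is exactly the statement that $\cM(\B)_0 \cF \mapsto \cF^{-1}(0)$ is well defined and injective. Surjectivity is immediate from transitivity (Theorem~\ref{thm:cM(B)action}(1)): given $q \in \B$ there is $\cF$ with $\cF(q) = 0$, i.e.~$\cF^{-1}(0) \ni q$; more concretely $\varphi(q,1)^{-1}(0) = q$ already exhibits a preimage. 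So the induced map $\cM(\B)_0 \backslash \cM(\B) \to \B$ is a continuous bijection.

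Finally I would upgrade the continuous bijection to a diffeomorphism. The cleanest route: Proposition~\ref{prop:cM(B)0modcM(B)} gives $\cM(\B) \to \cM(\B)_0 \backslash \cM(\B)$ as a principal $\cM(\B)_0$-bundle, in particular a surjective submersion; since $\cF \mapsto \cF^{-1}(0)$ is smooth and constant on the fibers $\cM(\B)_0 \cF$, it factors through a \emph{smooth} map $\Psi : \cM(\B)_0 \backslash \cM(\B) \to \B$ (this is the standard fact that a smooth map constant on the fibers of a submersion descends smoothly, e.g.~via local sections from Proposition~\ref{prop:localsections}). We have already seen $\Psi$ is a bijection. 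To see $\Psi$ is a diffeomorphism, either invoke the modified Helgason arguments as in Proposition~\ref{prop:BasM(B)quotient} (replacing $g$ by $g^{-1}$ throughout, which is legitimate since inversion is a diffeomorphism), or argue directly: both sides have dimension $7 - 3 = 4 = \dim \B$, and one exhibits a smooth right inverse, for instance $q \mapsto \cM(\B)_0\, \varphi(q,1)$, whose composition with $\Psi$ is the identity on $\B$; since $\Psi$ is a smooth bijection admitting a smooth section, and the section is also a bijection onto its image which is all of the quotient, $\Psi$ is a diffeomorphism with inverse $q \mapsto \cM(\B)_0\,\varphi(q,1)$. The only mild subtlety — the main thing to get right — is the descent-through-the-quotient step: one must justify that the set map on $\cM(\B)_0 \backslash \cM(\B)$ is smooth, not merely continuous, which is where Proposition~\ref{prop:cM(B)0modcM(B)} (submersion, local sections) or the quoted Helgason results do the real work; everything else is formal once the explicit normal form $\varphi(a,u)^{-1}(0) = a$ is in hand.
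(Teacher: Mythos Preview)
Your proposal is correct and follows essentially the same route as the paper: both use the key identity $\varphi(a,u)^{-1}(0)=a$ to identify the inverse orbit map with the first projection $\B\times\Spe(1)\to\B$, invoke Lemma~\ref{lem:inverseorbitat0} and transitivity for bijectivity, and then pass to the quotient via the principal bundle structure, exhibiting $q\mapsto \cM(\B)_0\,\varphi(q,1)$ as the smooth inverse (this is exactly the paper's $\widehat{\varphi}$). One small caveat: the alternative you mention of invoking the modified Helgason arguments from Proposition~\ref{prop:BasM(B)quotient} does not apply verbatim here, since those results are formulated for transitive Lie group actions and $\cM(\B)$ is not a group; but your direct argument with the explicit smooth section is complete and is precisely what the paper does.
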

\begin{proof}
	Note that Theorem~\ref{thm:cM(B)action}(1) and Lemma~\ref{lem:inverseorbitat0} show that the second assignment in the statement is a well defined bijection. It remains to prove the smoothness claims.
	
	As before, we consider the diffeomorphism 
	$\varphi : \B \times \Spe(1) \rightarrow \cM(\B)$ introduced in Proposition~\ref{prop:cM(B)isBxSp(1)assets} for which $\varphi(a,u)$ has $a$ as unique vanishing point (see the proof of Lemma~\ref{lem:inverseorbitat0}). This property implies that $\varphi$ followed by the map
	\begin{align*}
		\cM(\B) &\rightarrow \B  \\
			\cF &\mapsto \cF^{-1}(0),
	\end{align*}
	is the projection $\B \times \Spe(1) \rightarrow \B$ onto the first factor. Since $\varphi$ is a diffeomorphism, it follows that the previous assignment is indeed smooth. This establishes the first claim of our statement.
		
	We will use the isomorphism of Lie groups $\rho : \Spe(1) \rightarrow \cM(\B)_0$ introduced in Theorem~\ref{thm:cM(B)action}(3). From the definition of $\varphi$ and $\rho$ it is straightforward to see that
	\[
		\varphi(a,uv) = \rho(v)^{-1}\circ\varphi(a,u)
	\]
	for all $a \in \B$ and $u,v \in \Spe(1)$. In other words, $\varphi$ is $\rho$-equivariant for the right $\Spe(1)$-action on $\B \times \Spe(1)$ (defined by acting on the second factor) and the left $\cM(\B)_0$-action on $\cM(\B)$ (defined in Proposition~\ref{prop:cM(B)0actingoncM(B)}). Hence,
	$\varphi$ induces a bijection $\widehat{\varphi} : \B \rightarrow \cM(\B)_0 \backslash \cM(\B)$ such that the diagram
	\begin{equation}\label{eq:BascM(B)quotient}
	\xymatrix{
		\B \times \Spe(1) \ar[d] \ar[r]^-{\varphi} 
			& \cM(\B) \ar[d] \\
			\B \ar[r]^-{\widehat{\varphi}} 
			& \cM(\B)_0 \backslash \cM(\B)
	}
	\end{equation}
	is commutative,	where the vertical arrows are the fiber bundle projections. We note that the left vertical arrow has natural global sections given by $a \mapsto (a,u_0)$, where $u_0 \in \Spe(1)$ is any fixed element. By the definition of a section, we can replace the left vertical arrow with any such global section to still obtain a commutative diagram. This construction shows that $\widehat{\varphi}$ is smooth. Similarly, one can use local sections of the right vertical arrow to prove that the inverse of $\widehat{\varphi}$ is smooth as well, thus concluding that it is in fact a diffeomorphism.
	
	To compute an explicit expression for $\widehat{\varphi}$, let us consider the global section of the left vertical arrow above given by $a \mapsto (a,1)$. Using the latter we conclude, by the diagram commutativity explained above, that $\widehat{\varphi}$ is given by the following sequence of assignments
	\[
		\widehat{\varphi} : a \mapsto (a,1) \mapsto \varphi(a,1) \mapsto \cM(\B)_0 \varphi(a,1).
	\]
	This implies that 
	\[
		\widehat{\varphi}^{-1}(\cM(\B)_0 \varphi(a,1)) = a
			= \varphi(a,1)^{-1}(0),
	\]
	for every $a \in \B$. Since the expression for $\widehat{\varphi}^{-1}$ does not depend on the class representative, we conclude that
	\[
		\widehat{\varphi}^{-1}(\cM(\B)_0 \cF) = \cF^{-1}(0),
	\]
	for every $\cF \in \cM(\B)$. Hence, the assignment $\cM(\B)_0 \backslash \cM(\B) \rightarrow \B$ from our statement is exactly $\widehat{\varphi}^{-1}$ and so it is a diffeomorphism.
\end{proof}

The proof of Theorem~\ref{thm:BascM(B)quotient} implies the next result. 

\begin{corollary}\label{cor:cM(B)asbundleoverB}
	The inverse orbit map $\cM(\B) \rightarrow \B$ given by $\cF \mapsto \cF^{-1}(0)$ yields a globally trivial principal fiber bundle with structure group $\cM(\B)_0 \simeq \Spe(1)$ for its left action on $\cM(\B)$.
\end{corollary}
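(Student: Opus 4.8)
The plan is to recognize that the inverse orbit map factors through the quotient already analyzed in Theorem~\ref{thm:BascM(B)quotient}, and then to transport the principal bundle structure across the diffeomorphism there. First I would note that the inverse orbit map $\cM(\B)\to\B$, $\cF\mapsto\cF^{-1}(0)$, is the composition of the canonical projection $\cM(\B)\to\cM(\B)_0\backslash\cM(\B)$ with the diffeomorphism $\cM(\B)_0\backslash\cM(\B)\to\B$, $\cM(\B)_0\cF\mapsto\cF^{-1}(0)$, furnished by Theorem~\ref{thm:BascM(B)quotient}. That this factorization makes sense — i.e.~that the fibers of $\cF\mapsto\cF^{-1}(0)$ are exactly the $\cM(\B)_0$-orbits on $\cM(\B)$ for the left action of Proposition~\ref{prop:cM(B)0actingoncM(B)} — is precisely the equivalence in Lemma~\ref{lem:inverseorbitat0}.

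Next I would invoke Proposition~\ref{prop:cM(B)0modcM(B)}, which says that $\cM(\B)\to\cM(\B)_0\backslash\cM(\B)$ is a principal fiber bundle with structure group $\cM(\B)_0$ acting on the left of $\cM(\B)$. Composing this projection with the diffeomorphism of Theorem~\ref{thm:BascM(B)quotient} on the base changes neither the total space $\cM(\B)$ nor its $\cM(\B)_0$-action, so pulling back local trivializations and charts along that diffeomorphism shows that the inverse orbit map $\cM(\B)\to\B$ is itself a principal $\cM(\B)_0$-bundle. By Theorem~\ref{thm:cM(B)action}(3) we have $\cM(\B)_0\simeq\Spe(1)$, which gives the asserted structure group.

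For global triviality I would exhibit a global section. From the proof of Theorem~\ref{thm:BascM(B)quotient} one has $\varphi(a,1)^{-1}(0)=a$ for every $a\in\B$, where $\varphi:\B\times\Spe(1)\to\cM(\B)$ is the diffeomorphism of Proposition~\ref{prop:cM(B)isBxSp(1)assets}; hence $s:\B\to\cM(\B)$, $s(a)=\varphi(a,1)$, is a smooth section of the inverse orbit map. A principal bundle admitting a global section is globally trivial, and concretely $\cM(\B)_0\times\B\to\cM(\B)$, $(\cF_0,a)\mapsto\cF_0\circ\varphi(a,1)$, is a global trivialization — equivariance being the already recorded identity $\varphi(a,uv)=\rho(v)^{-1}\circ\varphi(a,u)$, so that this is nothing but the bundle isomorphism underlying diagram~\eqref{eq:BascM(B)quotient} between $\cM(\B)\to\B$ and the trivial bundle $\B\times\Spe(1)\to\B$.

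The only point deserving care — and the step I would write out most explicitly — is the middle one: verifying that composing a principal bundle projection with a diffeomorphism on the base yields again a principal bundle with the same structure group and action. This is routine once one checks that the three defining conditions of Definition~\ref{def:principalbundle} are each preserved, the submersion and local triviality conditions by transporting charts through the diffeomorphism and the freeness/properness/orbit condition because the action on $\cM(\B)$ is untouched; combined with Lemma~\ref{lem:inverseorbitat0} this identifies the fibers correctly. Everything else is bookkeeping on top of the results already established.
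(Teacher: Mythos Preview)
Your proposal is correct and follows essentially the same approach as the paper: both rely on the commutative diagram~\eqref{eq:BascM(B)quotient}, the equivariance of $\varphi$, and the global section $a\mapsto\varphi(a,1)$ to obtain triviality. The only cosmetic difference is that you route the principal bundle claim through Proposition~\ref{prop:cM(B)0modcM(B)} followed by a base-change along the diffeomorphism of Theorem~\ref{thm:BascM(B)quotient}, whereas the paper argues directly that the equivariant diffeomorphism $\varphi$ transports the trivial bundle $\B\times\Spe(1)\to\B$ to $\cM(\B)\to\B$.
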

\begin{proof}
	The commutative diagram \eqref{eq:BascM(B)quotient} has diffeomorphisms as horizontal arrows with the top one equivariant with respect to the corresponding group actions. It is now a simple exercise to prove that the right vertical arrow defines a principal fiber bundle. Furthermore, every section of one of the bundles induces a corresponding one for the other: this is easily achieved using $\varphi$ and $\widehat{\varphi}$. As noted in the proof of Theorem~\ref{thm:BascM(B)quotient}, the principal fiber bundle in the left of \eqref{eq:BascM(B)quotient} admits a global section and so the same holds for the principal fiber bundle in the right.
\end{proof}

\begin{remark}\label{rmk:cM(B)asbundleoverB}
	It is useful to compare Corollary~\ref{cor:cM(B)asbundleoverB} with Proposition~\ref{prop:BasM(B)quotient}. From the latter, applying the final observations of subsection~\ref{subsec:principalbundles}, it follows that the inverse orbit map
	\begin{align}
		\Mbb(\B) &\rightarrow \B  \label{eq:Mbb(B)asbundleoverB}  \\
			F &\mapsto F^{-1}(0), \notag
	\end{align}
	yields a principal fiber bundle with structure group $\Mbb(\B)_0$ acting on the left. On the other hand, \cite[Theorem~1.1,~Chapter~VI]{Helgason} implies the global triviality of the principal fiber bundle \eqref{eq:Mbb(B)asbundleoverB}. The latter is in fact a general property associated to certain Riemannian symmetric spaces; we refer to \cite[Chapter~VI]{Helgason} for further details. Hence, the behavior exhibited in Corollary~\ref{cor:cM(B)asbundleoverB} for $\B$, in the setup of (slice) regular M\"obius transformations, is similar to the classical setup of Riemannian symmetric spaces.
\end{remark}

Next, we show that slice regular M\"obius transformations allow to write $\B$ as a smooth quotient of the Lie group $\Spe(1,1)$. This will require to use double cosets instead of the usual one-sided cosets. We will consider $\Spe(1) \times \{1\}$ as a diagonally embedded subgroup of $\Spe(1,1)$. For this, we will denote by $\operatorname{diag}(a,b)$ the $2 \times 2$ diagonal matrix with diagonal entries $(a,b)$. 

\begin{theorem}\label{thm:BasSp(1,1)quotient}
	The double coset space $(\Spe(1) \times \{1\}) \backslash \Spe(1,1) / \Spe(1) I_2$ has a natural manifold structure obtained from Proposition~\ref{prop:buildprincipalbundle}. Moreover, the map given~by
	\begin{align*}
		(\Spe(1) \times \{1\}) \backslash \Spe(1,1) / \Spe(1) I_2 
			&\rightarrow \B  \\
		(\Spe(1) \times \{1\}) A \Spe(1) I_2
			&\mapsto \big(\cF_{A^{-1}}\big)^{-1}(0),
	\end{align*}
	is a well defined diffeomorphism.
\end{theorem}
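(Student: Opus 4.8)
The plan is to obtain this map as a composition of diffeomorphisms that are already available, reproducing in the slice regular setting the two-step construction of Remark~\ref{rmk:BasSp(1,1)quotient}. The one genuinely new ingredient is the identity
\[
	\cF_{A\operatorname{diag}(u,1)} = R_u \circ \cF_A, \qquad A \in \Spe(1,1),\ u \in \Spe(1),
\]
which I would prove by a short power series computation: if $A = \begin{pmatrix} a & c \\ b & d \end{pmatrix}$, then $A\operatorname{diag}(u,1) = \begin{pmatrix} au & c \\ bu & d \end{pmatrix}$, so $\cF_{A\operatorname{diag}(u,1)}(q) = (qc+d)^{-*}*\big((qa+b)u\big)$; since $f*(gu) = (f*g)u$ for slice regular $f,g$ and a constant $u\in\HH$ (a direct consequence of associativity of quaternion multiplication applied to the $*$-product coefficients), this equals $\big(\cF_A(q)\big)u = R_u(\cF_A(q))$. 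Replacing $A$ by $A^{-1}$, using $(\operatorname{diag}(u,1)A)^{-1} = A^{-1}\operatorname{diag}(u^{-1},1)$ together with the Lie group isomorphism $\rho(u) = R_{u^{-1}}$ of Theorem~\ref{thm:cM(B)action}(3), this rewrites as the equivariance identity
\[
	\cF_{(\operatorname{diag}(u,1)A)^{-1}} = \rho(u) \circ \cF_{A^{-1}}.
\]

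First I would produce the manifold structure on the double coset space. By the discussion at the end of subsection~\ref{subsec:principalbundles}, $\Spe(1,1)/\Spe(1)I_2$ is a manifold and $\Spe(1,1) \to \Spe(1,1)/\Spe(1)I_2$ is a principal $\Spe(1)$-bundle. Left translation by $\operatorname{diag}(u,1)$ preserves right $\Spe(1)I_2$-cosets and hence descends to a left action of $\Spe(1)\times\{1\}$ on $\Spe(1,1)/\Spe(1)I_2$ whose orbits are exactly the double cosets. This action is smooth (the projection is a submersion), proper (since $\Spe(1)\times\{1\}$ is compact), and free: if $\operatorname{diag}(u,1)A\Spe(1)I_2 = A\Spe(1)I_2$ then $A^{-1}\operatorname{diag}(u,1)A = vI_2$ for some $v \in \Spe(1)$, forcing $\operatorname{diag}(u,1) = vI_2$, hence $u = 1$. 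Proposition~\ref{prop:buildprincipalbundle}, in its left-action version, then gives $(\Spe(1)\times\{1\})\backslash\Spe(1,1)/\Spe(1)I_2$ a manifold structure for which the projection is a principal $(\Spe(1)\times\{1\})$-bundle; this is the structure referred to in the statement.

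Next I would assemble the diffeomorphism. Theorem~\ref{thm:cM(B)manifold} provides the diffeomorphism $\Phi : \Spe(1,1)/\Spe(1)I_2 \to \cM(\B)$, $A\Spe(1)I_2 \mapsto \cF_{A^{-1}}$, and the displayed equivariance identity says precisely that $\Phi$ intertwines the above left $\Spe(1)\times\{1\}$-action with the left $\cM(\B)_0$-action on $\cM(\B)$ (transported along $\rho$) of Proposition~\ref{prop:cM(B)0actingoncM(B)}. Since $\Phi$ is an equivariant diffeomorphism carrying orbits bijectively onto orbits, it descends to a diffeomorphism
\[
	(\Spe(1)\times\{1\})\backslash\Spe(1,1)/\Spe(1)I_2 \;\longrightarrow\; \cM(\B)_0\backslash\cM(\B),\qquad (\Spe(1)\times\{1\})A\Spe(1)I_2 \mapsto \cM(\B)_0\cF_{A^{-1}},
\]
the smoothness of this induced map and of its inverse being checked by composing with local sections (Proposition~\ref{prop:localsections}), exactly as in the proofs of Theorem~\ref{thm:BascM(B)quotient} and Corollary~\ref{cor:cM(B)asbundleoverB}. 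Composing with the diffeomorphism $\cM(\B)_0\backslash\cM(\B) \to \B$, $\cM(\B)_0\cF \mapsto \cF^{-1}(0)$, of Theorem~\ref{thm:BascM(B)quotient}, the total composite sends $(\Spe(1)\times\{1\})A\Spe(1)I_2 \mapsto \cM(\B)_0\cF_{A^{-1}} \mapsto \big(\cF_{A^{-1}}\big)^{-1}(0)$, which is the map in the statement; being a composition of diffeomorphisms it is well defined, bijective, and a diffeomorphism.

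The part I expect to require the most care is bookkeeping rather than depth: pinning down the equivariance identity with the correct placement of the inverse and of $\operatorname{diag}(u,1)$ versus $\operatorname{diag}(u^{-1},1)$, so that a \emph{left} quotient on the $\Spe(1,1)$ side genuinely corresponds to the \emph{left} $\cM(\B)_0$-quotient on the transformation side (this is the slice regular manifestation of the double coset phenomenon discussed in Remark~\ref{rmk:BasSp(1,1)quotient-sliceregular}), together with justifying the descent of $\Phi$ at the level of manifolds and not merely of sets. An alternative route, bypassing $\Phi$, would argue directly with the chart $\varphi$ of Proposition~\ref{prop:cM(B)isBxSp(1)assets} and the explicit formula \eqref{eq:varphi(a,u)(q)}, but the composition of already-established diffeomorphisms above seems the most economical.
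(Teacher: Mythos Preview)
Your proof is correct and follows essentially the same route as the paper: establish the equivariance $\cF_{(\operatorname{diag}(u,1)A)^{-1}} = \rho(u)\circ\cF_{A^{-1}}$, use it to give the double coset space its manifold structure via Proposition~\ref{prop:buildprincipalbundle}, descend the diffeomorphism of Theorem~\ref{thm:cM(B)manifold} to the quotients, and then compose with Theorem~\ref{thm:BascM(B)quotient}. The only minor difference is that you verify freeness of the $\Spe(1)\times\{1\}$-action directly (note that the step ``forcing $\operatorname{diag}(u,1)=vI_2$'' needs the observation that $a,d\neq 0$ for $A\in\Spe(1,1)$, since $vI_2$ is not central over $\HH$), whereas the paper transports freeness from the $\cM(\B)_0$-action via the equivariant diffeomorphism.
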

\begin{proof}
	Let us denote by $\psi : \Spe(1,1)/ \Spe(1) I_2 \rightarrow \cM(\B)$ the map considered in Theorem~\ref{thm:cM(B)manifold} that yields the manifold structure of $\cM(\B)$. Hence, $\psi$ is the diffeomorphism given by
	\[
		\psi(A\Spe(1) I_2) = \cF_{A^{-1}},
	\]
	for every $A \in \Spe(1,1)$. Let us also consider the map
	\begin{align*}
		\widetilde{\rho} : \Spe(1) \times \{1\}
				&\rightarrow \cM(\B)_0  \\
			\widetilde{\rho}(u,1) & = R_{u^{-1}}.
	\end{align*}
	Hence, Theorem~\ref{thm:cM(B)action}(3) implies that $\widetilde{\rho}$ is an isomorphism of Lie groups. 
	
	With this notation, we can compute as follows
	\begin{align*}
		\psi\big(\operatorname{diag}(u,1)A \Spe(1) I_2\big)
			&= \cF_{\big(\operatorname{diag}(u,1)A\big)^{-1}}  \\
			&= R_{u^{-1}} \circ \cF_{A^{-1}}  \\
			&= \widetilde{\rho}(u,1) \circ \psi\big(A\Spe(1) I_2\big),
	\end{align*}
	for every $u \in \Spe(1)$ and $A \in \Spe(1,1)$. In other words, $\psi$ is a $\widetilde{\rho}$-equivariant diffeomorphism. By Proposition~\ref{prop:cM(B)0actingoncM(B)} the action of $\cM(\B)_0 = \widetilde{\rho}(\Spe(1) \times \{1\})$ on $\cM(\B)$ is proper and free. Hence, we conclude that the $\Spe(1) \times \{1\}$-action on $\Spe(1,1)/\Spe(1) I_2$ is proper and free as well. This yields a natural manifold structure on the double coset space $(\Spe(1) \times \{1\}) \backslash \big(\Spe(1,1) / \Spe(1) I_2\big)$ once we apply Proposition~\ref{prop:buildprincipalbundle}. Similarly, one can show that $(\Spe(1) \times \{1\}) \backslash \Spe(1,1)$ is a manifold with a proper and free $\Spe(1) I_2$-action that thus yields, by Proposition~\ref{prop:buildprincipalbundle}, a natural manifold structure on $\big((\Spe(1) \times \{1\}) \backslash \Spe(1,1)\big) / \Spe(1) I_2$. It is a standard exercise to prove that there is a canonical diffeomorphism
	\[
		(\Spe(1) \times \{1\}) \backslash \big(\Spe(1,1) / \Spe(1) I_2\big) \simeq
		\big((\Spe(1) \times \{1\}) \backslash \Spe(1,1)\big) / \Spe(1) I_2,
	\]
	between the two double coset spaces just introduced. This yields a (unique) natural manifold structure on 
	\[
		(\Spe(1) \times \{1\}) \backslash \Spe(1,1) / \Spe(1) I_2,
	\]	
 	where we have dropped the (two possible sets of) parentheses because of the diffeomorphism above. This establishes the first claim of the statement.
 	
 	An argument similar to the one used in the proof of Theorem~\ref{thm:BascM(B)quotient} and its diagram \eqref{eq:BascM(B)quotient} allows to conclude that $\psi$ induces the diffeomorphism given by
	\begin{align*}
		\widehat{\psi} : (\Spe(1) \times \{1\}) \backslash 
				\Spe(1,1)/\Spe(1) I_2 
					&\rightarrow \cM(\B)_0\backslash\cM(\B)  \\
			(\Spe(1) \times \{1\}) A \Spe(1) I_2 
					&\mapsto \cM(\B)_0 \cF_{A^{-1}}.
	\end{align*}
	We now observe that the map from the statement is the composition of $\widehat{\psi}$ with the diffeomorphism from Theorem~\ref{thm:BascM(B)quotient}, and so it is a diffeomorphism as well. This proves the second claim and completes the proof of the statement.
\end{proof}

\begin{remark}\label{rmk:BasSp(1,1)quotient-sliceregular}
	It is important to highlight the similarities and differences in the realization of $\B$ as a quotient of other groups and manifolds using either the classical or the slice regular M\"obius transformations.
	
	In the first place, we observe that Proposition~\ref{prop:BasM(B)quotient} and Theorem~\ref{thm:BascM(B)quotient} give the same sort of result: in both cases, $\B$ is the quotient obtained by modding out using a compact Lie group. However, in the classical case $\Mbb(\B)$ is also itself a group and in the slice regular case $\cM(\B)$ is just a manifold. On the other hand, for the classical case we consider a quotient of Lie groups, but in the slice regular case we use the quotient associated to a principal fiber bundle.
	
	Next, we can compare Proposition~\ref{prop:BasSp(1,1)quotient} and Theorem~\ref{thm:BasSp(1,1)quotient}. In both cases, we express $\B$ as a quotient of the Lie group $\Spe(1,1)$. As noted in Remark~\ref{rmk:BasSp(1,1)quotient}, the quotient in the classical case can be naturally reduced to a one-sided coset given by~\eqref{eq:BasSp(1,1)quotient}: this uses the fact that $\Z_2 I_2$ is central. However, for the slice regular case the double coset cannot be further simplified. As noted in Remark~\ref{rmk:cM(B)manifold} this follows from the non-commutativity of $\HH$, which implies that $\Spe(1) I_2$ is not a central subgroup of $\Spe(1,1)$. 
	
	Hence, we have obtained two similar, but at the same time quite different, expressions of $\B$ as quotients of $\Spe(1,1)$, and these are
	\begin{align}
		\B &\simeq (\Spe(1) \times \Spe(1)) \backslash \Spe(1,1), 
				&\text{ classical case,}  \label{eq:Bclassicalquotient}  \\
		\B &\simeq (\Spe(1) \times \{1\})
				\backslash \Spe(1,1) /\Spe(1) I_2 
				&\text{ slice regular case.}  
						\label{eq:Bsliceregularquotient}
	\end{align}
	We observe that
	\[
		(\Spe(1) \times \{1\}) \Spe(1) I_2 = \Spe(1) \times \Spe(1),
	\]
	but the lack of commutativity of $\HH$ prohibits us from using this identity to relate \eqref{eq:Bclassicalquotient} and \eqref{eq:Bsliceregularquotient}. Hence, from the point of view of Lie theory, these quotient realizations of $\B$ differ from each other. In particular, the same will occur for any structure derived from \eqref{eq:Bclassicalquotient} and \eqref{eq:Bsliceregularquotient}. It is interesting to note that both of these expressions take a quotient using two copies of $\Spe(1)$: the former takes quotient on the left side only and the latter uses one copy of $\Spe(1)$ on each side to take the quotient. Again, that these constructions are not the same comes from the non-commutativity of $\HH$.
\end{remark}

\begin{remark}\label{rmk:FvsF-1}
	The quotient realizations of $\cM(\B)$ and $\B$, obtained in Theorems~\ref{thm:cM(B)manifold} and \ref{thm:BascM(B)quotient} have used the assignments
	\[
	A\Spe(1) I_2 \mapsto \cF_{A^{-1}}, \quad
	\cM(\B)_0\cF \mapsto \cF^{-1}(0),
	\]
	respectively, which can be considered as non-standard because of the introduction of an inverse. We will now explain the reason for our choice.
	
	For the classical case, we expressed $\B$ as a quotient of a Lie group in Proposition~\ref{prop:BasM(B)quotient} using the non-standard inverse orbit map at $0$, which is given by
	\begin{align*}
		\Mbb(\B) &\rightarrow \B  \\
			F &\mapsto F^{-1}(0).
	\end{align*}
	However, it is most frequently considered the usual orbit map given~by
	\begin{align*}
		\Mbb(\B) &\rightarrow \B  \\
			F &\mapsto F(0).
	\end{align*}
	To achieve the bijection of $\B$ with a quotient of $\Mbb(\B)$ one considers, in the former case, the sequence of equivalences
	\begin{align*}
		F_1^{-1}(0) = F_2^{-1}(0)
			&\iff F_2 \circ F_1^{-1}(0) = 0  \\
			&\iff F_2 \circ F_1^{-1} \in \Mbb(\B)_0  \\
			&\iff \Mbb(\B)_0 F_2 = \Mbb(\B)_0 F_1,
	\end{align*}
	and for the latter case one considers
	\begin{align*}
		F_1(0) = F_2(0)
			&\iff F_2^{-1} \circ F_1(0) = 0  \\
			&\iff F_2^{-1} \circ F_1 \in \Mbb(\B)_0  \\
			&\iff F_2 \Mbb(\B)_0 = F_1 \Mbb(\B)_0,
	\end{align*}
	which yield diffeomorphisms between $\B$ and the quotient spaces $\Mbb(\B)_0\backslash \Mbb(\B)$ (as obtained in Proposition~\ref{prop:BasM(B)quotient}) and $\Mbb(\B) / \Mbb(\B)_0$, respectively. Both arguments are possible because $\Mbb(\B)$ is a group, so that both compositions $F_2 \circ F_1^{-1}$ and $F_2^{-1} \circ F_1$ belong to $\Mbb(\B)$ and we can further consider whether either of them belongs to the subgroup $\Mbb(\B)_0$.
	
	For the slice regular case, the manifold $\cM(\B)$ is not a group under the composition of maps (see~Remark~\ref{rmk:regMobius-composition}), and so the argument used above for $\Mbb(\B)$ cannot go through. This difficulty is overcame by the use of Lemma~\ref{lem:inverseorbitat0} that yields the equivalence
	\begin{align*}
		\cF_1^{-1}(0) = \cF_2^{-1}(0) 
				&\iff \exists\; \cF_0 \in \cM(\B)_0 
							\text{ such that }
							\cF_1 = \cF_0 \circ \cF_2  \\  
				&\iff \cM(\B)_0 \cF_1 = \cM(\B)_0 \cF_2,
	\end{align*}
	which is key to obtain Theorem~\ref{thm:BascM(B)quotient}. On the other hand, compositions of the form $\cF_2 \circ \cF_0$, where $\cF_2 \in \cM(\B)$ and $\cF_0 \in \cM(\B)_0$, in general do not belong to $\cM(\B)$. The latter was noted in Remark~\ref{rmk:regMobius-composition}. Hence, we necessarily have to use the inverse orbit map at $0$ considered in Theorem~\ref{thm:BascM(B)quotient}.
	
	Note that we have introduced an inverse in Theorem~\ref{thm:cM(B)manifold} to express $\cM(\B)$ as a quotient of $\Spe(1,1)$. In this case, it is easy to verify that we do have the choice to omit such inverse and still obtain a similar quotient. Our reason to consider the inverse in this case is twofold. In the first place, the inverses considered in both Theorems~\ref{thm:cM(B)manifold} and \ref{thm:BascM(B)quotient} can be thought as offsetting each other when expressing $\B$ as quotient of $\Spe(1,1)$ in Theorem~\ref{thm:BasSp(1,1)quotient}, even though they cannot actually cancel each other. Secondly, if we had omitted the inverse in Theorem~\ref{thm:cM(B)manifold}, then Theorem~\ref{thm:BasSp(1,1)quotient} would have yielded a diffeomorphism of $\B$ with a double for which both subgroups act on the left. All of this considered, we believe our choice of inverse maps provide the simplest expression of $\B$ as a quotient of the Lie group $\Spe(1,1)$.
\end{remark}

\subsection*{Acknowledgment}
This research was partially supported by SNI-Conahcyt and by Conahcyt Grants 280732 and 61517.

\end{document}